\documentclass{article}
\usepackage{amsmath}
\usepackage{amsfonts}
\usepackage{amssymb}
\usepackage{pstricks,pst-node,amssymb,amsmath,graphics,latexsym,tabularx,shapepar}
%\usepackage[all,cmtip,2cell,line]{xypic}
%\usepackage{graphicx}%
%\setcounter{MaxMatrixCols}{30}
%TCIDATA{OutputFilter=latex2.dll}
%TCIDATA{Version=4.10.0.2363}
%TCIDATA{CSTFile=40 LaTeX article.cst}
%TCIDATA{Created=Thursday, October 01, 2009 16:08:34}
%TCIDATA{LastRevised=Sunday, October 11, 2009 09:11:49}
%TCIDATA{<META NAME="GraphicsSave" CONTENT="32">}
%TCIDATA{<META NAME="DocumentShell" CONTENT="Standard LaTeX\Blank - Standard LaTeX Article">}
%TCIDATA{Language=American English}
\textwidth=17.0cm \textheight=23.0cm \oddsidemargin=0cm
\evensidemargin=1cm \topmargin=-0.8cm \topskip=0mm
\newtheorem{theorem}{Theorem}

\newtheorem{corollary}[theorem]{Corollary}

\newtheorem{definition}[theorem]{Definition}
\newtheorem{example}[theorem]{Example}

\newtheorem{lemma}[theorem]{Lemma}

\newtheorem{proposition}[theorem]{Proposition}
\newtheorem{remark}[theorem]{Remark}

\newenvironment{proof}[1][Proof]{\noindent\textbf{#1.} }{\ \rule{0.5em}{0.5em}}
\newcommand{\bpartial}{\mathop{\partial\kern -4pt\raisebox{.8pt}{$|$}}}
\newcommand{\bra}{\mathopen{[\kern-1.6pt[}}
\newcommand{\ket}{\mathclose{]\kern-1.5pt]}}
\newcommand{\bbra}{\mathopen{[\kern-2.2pt[\kern-2.3pt[}}
\newcommand{\bket}{\mathclose{]\kern-2.1pt]\kern-2.3pt]}}

\makeindex

\begin{document}
\title {\large{\bf A note on  integrabiliy of Hamiltonian systems on the co-adjoint Lie groupoids}}
\vspace{3mm}
\author {\small{ \bf  Gh. Haghighatdoost$^1$}\hspace{-1mm}{ \footnote{Corresponding author. }, \small{ \bf R. Ayoubi $^2$ }\hspace{-2mm} }\\
{\small{$^{1}$, $^{2}$\em Department of Mathematics,Azarbaijan Shahid Madani University, Tabriz, Iran}}\\
{\small{ E-mails: $^{1}$\em gorbanali@azaruniv.ac.ir, $^{2}$\em rezvaneh.ayoubi@azaruniv.ac.ir }}\\
{\small{$^{1}$\em Telephone: +9831452392, Fax: +9834327541 }}
}
 \maketitle
\begin{abstract}
As we said in our previous work \cite{H A 2}, the main idea of our research is to introduce a class of Lie groupoids by means of co-adjoint representation of a Lie groupoid on its isotropy Lie algebroid,
 which we called co-adjoint Lie groupoids. In this paper, we will examine the relationship between  
 structural mappings of the Lie algebroid associated to Lie groupoid and co-adjoint Lie algebroid. Also, we try to construct and define  integrabiliy of
Hamiltonian system on the co-adjoint Lie groupoids. In addition, we show that co-adjoint Lie groupoid associated to a symplectic Lie groupoid is a symplectic Lie groupoid.\\

\noindent {\bf Keywords}: Co-adjoint Lie groupoid, Co-adjont Lie algebroid, Symplectic Lie groupoid, Hamiltonian system.\\

\noindent{\bf AMS}: 18B40, 53D17, 70H08, 37J35. 

% \subclass{MSC code1 \and MSC code2 \and more}
\end{abstract}

\maketitle

\section{Introduction}
In mathematics, the adjoint representation (or adjoint action) of a Lie group $\mathfrak{G}$ is a method of presenting  the elements of the group as linear transformations of its Lie algebra, which is considered as a vector space. For every Lie group, this natural representation is obtained by linearizing (i.e. taking the differential of) the action of $\mathfrak{G}$ on itself by conjugation.
Also, the co-adjoint representation of a Lie group $\mathfrak{G}$ is the dual of the adjoint representation. If $\mathfrak{g}$ denotes the Lie algebra of $\mathfrak{G}$, the corresponding action of $\mathfrak{G}$ on $\mathfrak{g}^{\ast}$, the dual space to $\mathfrak {g}$, is called the co-adjoint action \cite{Haghighatdoost}. A geometrical interpretation is as the action by left-translation on the space of right-invariant 1-forms on $\mathfrak{G}$.
Kirillov's work has played a major role in demonstrating the importance of co-adjoint representation. In the Kirillov method of orbits, representations of $\mathfrak{G}$ are constructed geometrically starting from the co-adjoint orbits \cite{Kirillov}.

It is well-known that the orbit of co-adjoint representation of a Lie group is symplectic manifold and has natural Poisson structure which called the Kirillov-Kostant structure. The study of symplectic and Poisson structures on the orbits of co-adjoint action of Lie groups is examined in \cite{Bolsinov} and \cite{Trofimov}.

Integrable Hamiltonian systems play an important role in the study of physical systems because of their interesting properties in terms of mathematics and physics. The number of Hamiltonian systems identified to date is very small and there is no general method for expressing whether a Hamiltonian system is integrable or not.
 It is worth noting that integrable Hamiltonian systems are introduced for Lie algebra $e(3)$ in \cite{Bolsinov}, for Lie algebra $so(4)$ and $so(3,1)$ in \cite{HO, Sokolov}.

The groupoids introduced by H. Brandt in 1926 are mathematical structures that can describe the properties of symmetry more general than what described by groups. Subsequently, groupoids with additional structures (topological and distinctive) were used as essential tools in differential topology and geometry. 
As we know, if a Lie groupoid $G$ is endowed with a symplectic form that is multiplicative, then $G$ is called a symplectic groupoid.
Symplectic groupoids can be used for the construction of noncommutative deformations of the algebra of smooth functions on a manifold, with potential applications to quantization \cite{ Marle}.

As we know, there is unfortunately no natural way to to describe the co-adjoint representation of Lie groupoids.  So, the authors, try to solve this by different methods \cite{Lang}. 
Also, as we show in \cite{H A 2}, we introduce the co-adjoint orbits of Lie groupoids by means of co-adjoint representation of a Lie groupoid on its isotropy Lie algebroid. Then we were able to introduce a class of Lie groupoids called co-adjoint Lie groupoids and examine the structure of its associated Lie algebroid. In \cite{H}, the first author has studied the optimal control problems on the co-adjoint Lie groupoids.
As we know, the topic of Hamiltonian mechanics on Lie algebroids is one of the topics that has been highly regarded by the authors and has many applications. (For further reading, you can refer to \cite{H A 1} and its references.)
In this article, which is a continuation of our previous work \cite{H A 2}, we will show the interesting relationship between the structural functions of the Lie algebroid associated to Lie groupoid and the structural functions of the co-adjoint Lie algebroid. Furthermore, given the specific mode of the co-adjoint Lie groupoids, i.e. the co-adjoint the groupoids in the form $O(\xi) = M \times O(\xi^{\prime})$ which $O(\xi^{\prime}) \subset \mathfrak{g}^{\ast}$ is co-adjoint orbit of Lie groups, we intend to define the integrabiliy of Hamiltonian system on the co-adjoint Lie algebroid based on the methods expressed in \cite{Bolsinov, HO, Sokolov}. In addition, we show that
co-adjoint Lie groupoid associated to a symplectic Lie groupoid is a symplectic Lie groupoid.
%=============================================================================================
\section{ Some basic concepts and definitions}
\subsection{ Review of Lie groupoid and Lie algebroid conceptions}
We will first review some of the basic concepts and definitions of Lie groupoids and Lie algebroids, one can refer to \cite{Mackenzie} to see more concepts and details.

\begin{definition}
\label{groupoid}
A groupoid $G$ over $M$, which we will denote by $G \rightrightarrows M$, consists of two sets $G$ and $M$ together with structural mappings $\alpha, \beta, 1, \iota$ and $m,$  where source mapping $\alpha: G \longrightarrow M,$ target mapping $\beta:G \longrightarrow M,$ unit mapping $1: M \longrightarrow G,$ inverse mapping $\iota : G \longrightarrow G $ and multiplication mapping $ m: G_{2} \longrightarrow G $ which $ G_{2} = \lbrace (g,h) \in G \times G ~\vert ~ \alpha (g) = \beta (h) \rbrace $ is subset of $G \times G.$\\

A Lie groupoid is a groupoid $G \rightrightarrows M$ for which $G$ and $M$ are smooth manifolds, $\alpha, \beta, 1, \iota$ and $m,$ are differentiable mappings and besides, $\alpha,\beta$ are differentiable submersions.
\end{definition}

\begin{definition}
\label{bisection}
Let $ G \rightrightarrows M$ be a Lie groupoid. A bisection of $G$ is a smooth map $ \sigma : M \longrightarrow G $  which is right inverse to $\alpha : G \longrightarrow M, ~ ( \alpha \circ \sigma = id_{M} )$ and $\beta \circ \sigma : M \longrightarrow M $ is a diffeomorphism.

Let $\sigma$ be a bisection of $G.$ The left translation corresponding to $\sigma$ is defined as:
\begin{eqnarray}
L_{\sigma} : G & \longrightarrow & G \nonumber \\
g & \longmapsto & \sigma (\beta g) g \nonumber
\end{eqnarray}
and the right translation corresponding to $\sigma$ is defined by:
\begin{eqnarray}
 R_{\sigma} :  G & \longrightarrow &  G   \nonumber  \\
g & \longmapsto & g \sigma \big( (\beta \circ \sigma )^{-1} (\alpha g ) \big). \nonumber
\end{eqnarray}
\end{definition}

Another very useful concept in the subject of groupoids is the definition of action of groupoids, which we will refer to. Also, there is a similar concept of a right smooth action.
\begin{definition}
\label{groupoid action}
Let $ G \rightrightarrows M$ be a Lie groupoid and $J : N \longrightarrow M$ be a smooth map. A smooth left action of Lie groupoid G on $J : N \longrightarrow M$ is a smooth map $\theta : G_{~ \alpha} \times_{~J} N \longrightarrow N $ which has the following conditions.:
\begin{enumerate}
\item For every $(g,n) \in G_{~ \alpha} \times_{~J} N,~~~~ J (g.n) = \beta (g),$ 
\item For every $n \in N,~~~~ 1_{J(n)} .n = n,$
\item For every $(g , g^{\prime}) \in G_{2} $ and $n \in J^{-1} (\alpha (g^{\prime})),~~~~ g . ( g^{\prime} . n ) = (g g^{\prime})  . n$
\end{enumerate}

(where $ g.n := \theta ( g , n )$ and $ \theta (g) (n):= \theta (g , n )$).\\

\end{definition}

\begin{definition}
A Lie algebroid A over a manifold M is a vector bundle  $\tau : A \longrightarrow M$ with the following items:
\begin{enumerate}
\item A Lie bracket $[\vert ~,~ \vert] $ on the space of smooth sections $ \Gamma( \tau),$
$$[\vert ~,~ \vert]: \Gamma (\tau) \times \Gamma (\tau) \longrightarrow \Gamma (\tau) $$
$$(X , Y ) \longmapsto [\vert X , Y \vert].$$
\item A morphism of vector bundles $\rho : A \longrightarrow TM,$ called the anchor map, where $TM$ is is the tangent bundle of $M,$
\end{enumerate}
such that the anchor and the bracket satisfy the following Leibniz rule:
$$[\vert X , f Y \vert] = f [\vert X , Y \vert] + \rho (X) (f) Y$$
where $X , Y \in \Gamma (\tau)$, $~f \in C^{\infty} (M)$ and $\rho (X) f$ is the derivative of $f$  along the vector field $\rho (X).$

\end{definition}

\begin{definition}
\label{associated Lie algebroid}
Let $G \rightrightarrows M $  be a Lie groupoid. Consider the vector bundle  $\tau : AG \longrightarrow M,$ where 
$$AG := ker T\alpha \vert_{1_{p}}$$
where $\alpha : G \longrightarrow M$ is source mapping of the Lie groupoid $G \rightrightarrows M$,  $T\alpha : TG \longrightarrow TM$ is tangent mapping of $\alpha. $\\

It is well-known that $AG$ has Lie algebroid structure as follows:\\
As we know, there exists a bijection between the space of sections $\Gamma (\tau)$ and the set of left (right) invariant vector fields on $G$ (see \cite{Mackenzie}). 
If $X$ be a section of $\tau : AG \longrightarrow M,$ right  invariant and left invariant vector fields corresponding to $X$ are, respectively, defined  by
\begin{eqnarray}
 \overrightarrow{X}(g) &=& T R_{g} ( X (\beta (g))) \nonumber  \\
 \overleftarrow{X}(g) &=& - T (L_{g} ) T (\iota) ( X (\alpha (g)),   \nonumber
\end{eqnarray}
where  $L_{g}$ and $R_{g}$ are left translation and right translation corresponding to $g\in G.$  
\\

Therefore, Lie algebroid structure $([\vert ~,~ \vert], \rho ) $ on $AG$ can be introduced as follows:
\begin{enumerate}
\item The anchor map: 
$$ \rho: AG  \longrightarrow  TM  $$
$$ \rho(X)(x) =T_{1(p)} \beta (X(p)) $$
where $X \in \Gamma (\tau), p \in M.$\\
\item Lie bracket:     
 $$\Gamma (AG) \times \Gamma (AG) \longrightarrow \Gamma (AG)$$
 $$[\vert \overrightarrow{X , Y} \vert] := [ \overrightarrow{X} , \overrightarrow{Y} ], ~~~( [\vert \overleftarrow{X , Y} \vert] := - [ \overleftarrow{X} , \overleftarrow{Y} ] )$$ \\
where $ X , Y \in \Gamma (\tau)$ and $[~,~]$ is standard Lie bracket of vector fields.
\end{enumerate}
\end{definition}

In definition \ref{groupoid action}, we have presented the definition of the action of the Lie groupoid on smooth mapping, now we want to express the definition of action of a Lie algebroid on a smooth mapping.

\begin{definition}
Let $ ( A, M, \pi, \rho , [\vert ~,~ \vert ] )$ be a Lie algebroid and $J : N \longrightarrow M$ be a smooth map.
An action of a Lie algebroid $A$ on map $J : N \longrightarrow M$ is a map
$ \theta : \Gamma (A) \longrightarrow \mathfrak{X} (N) $ which satisfies in the following properties:
 
\begin{enumerate}
\item $ \theta ( X + Y ) = \theta (X) + \theta (Y)$
\item $ \theta ( f X ) = J^{\ast} f \theta (X) $
\item $ \theta  ( [\vert X , Y \vert] ) = [ \theta (X) , \theta (Y) ]  $
\item $T J ( \theta (X) ) = \rho (X)$
\end{enumerate}
 for all $ f \in C^{\infty} (M)$ and $ X, Y \in \Gamma^{\infty} (A).$ 
In addition, $J^{\ast} : C^{\infty} (M) \longrightarrow C^{\infty} (N) $ such that $J^{\ast} f = f \circ J \in C^{\infty} (N)$ is pullback of $f$ by $J$.
\end{definition}

\begin{remark}
\label{action remark}
As mentioned in \cite{Bos},  every action  $ \theta$ of a Lie groupoid $G$ on $ J : N \longrightarrow M $ induce an action $\theta^{\prime}$ of a Lie algebroid $ A(G) $ on $ J : N \longrightarrow M $ as follows:
$$ \theta ^{\prime} (X) (n) := \dfrac{d}{dt} \vert _{t=0} ~Exp ( t X )_{J(n)} . n.$$
\end{remark}

\begin{definition}
The Lie groupoid $G \rightrightarrows M $  is regular Lie groupoid if the anchor
\begin{eqnarray}
( \beta , \alpha ) : G &\longrightarrow & M  \nonumber \\
g &\longmapsto & \big( (\beta (g) , \alpha(g) \big)  \nonumber
\end{eqnarray}
  is a mapping of constant rank.

\end{definition} 

\begin{remark}
Similar to what was said in the previous work \cite{H A 2}, throughout the article, we assume that $G$ be a regular Lie groupoid over $M.$
\end{remark}

\subsection{ The adjoint and co-adjoint actions}
In this section, as described in \cite{Bos}, we will briefly introduce the concepts of adjoint and co-adjoint actions of a Lie groupoid.

Let $G \rightrightarrows M $ be a Lie groupoid. For an arbitrary element $p \in M,$ the isotropy group of $ G \rightrightarrows M $ is defined by $ I_{p} = \alpha ^{-1} (p) \cap \beta ^{-1} (p).$  As we mentioned in \cite{H A 2} $I_{p}$ is a Lie group. Moreover,  $I_{G} = ( \cup I_{p} )_{p \in M}$ is a groupoid over $M$ but it is not a smooth manifold in general, see example A.10 in \cite{Schmeding}.

\begin{lemma}
The associated isotropy groupoid of a regular Lie groupoid  $G \rightrightarrows M $ is a Lie groupoid.
\end{lemma}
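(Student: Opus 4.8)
\medskip
\noindent\textbf{Proof strategy.}
Write $\delta=(\beta,\alpha):G\longrightarrow M\times M$ for the anchor of the groupoid and let $\Delta_M\subset M\times M$ be the diagonal. Then, as a set, $I_G=\delta^{-1}(\Delta_M)$, and the plan is to promote this set-theoretic description to an embedded submanifold structure on $I_G$ using regularity, and then to verify that the structure maps of $G$ restrict to make $I_G\rightrightarrows M$ a Lie groupoid. The point where regularity is genuinely used is exactly the point at which the general statement fails --- without it $I_G$ need not be smooth, as in the example cited just before the statement --- so I expect that to be the heart of the matter.

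First I would extract what regularity gives. Splitting $T_{1(p)}G\cong T_pM\oplus A_pG$ and computing $d\delta$ at a unit $1(p)$, one finds that the rank of $\delta$ at $1(p)$ is $\dim M+\dim\mathcal O_p$, where $\mathcal O_p=\beta(\alpha^{-1}(p))$ is the orbit through $p$ (here $\dim\mathcal O_p=\operatorname{rk}\rho_p$, the rank of the anchor of $AG$). Since $\delta$ has constant rank by hypothesis, all orbits have one and the same dimension $q$, and $\operatorname{rk}\delta\equiv \dim M+q$. Standard facts about Lie groupoids then give that $\alpha^{-1}(p)$ is a closed embedded submanifold of $G$ and that $\beta|_{\alpha^{-1}(p)}:\alpha^{-1}(p)\longrightarrow\mathcal O_p$ is a surjective submersion (indeed a principal $I_p$-bundle). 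Hence each isotropy group $I_p=\bigl(\beta|_{\alpha^{-1}(p)}\bigr)^{-1}(p)$ is an embedded submanifold of $\alpha^{-1}(p)$, of dimension $\dim G-\dim M-q$, and is a Lie group.

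The crucial step is that $I_G=\delta^{-1}(\Delta_M)$ is an embedded submanifold of $G$, necessarily of dimension $\dim G-q$ (consistently with $\dim M+\dim I_p$). For this I would argue locally. Near a point $g_0\in I_G$ the constant-rank theorem puts $\delta$ in the normal form of a linear projection of $\mathbb R^{\dim G}$ onto a coordinate subspace of dimension $k=\dim M+q$; the image of a small neighbourhood is then an open piece of that subspace, which is precisely (a local piece of) the orbit relation $R=\{(x,y)\in M\times M:\ x,y\ \text{lie in the same orbit}\}$. Since $\Delta_M\subseteq R$, in these target coordinates $\Delta_M$ lies in the $k$-dimensional slice and appears there as an embedded $\dim M$-submanifold; pulling it back along the (submersive) linear projection exhibits $\delta^{-1}(\Delta_M)$ locally as a submanifold of dimension $\dim M+(\dim G-k)=\dim G-q$, and these local models glue. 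Equivalently, regularity is exactly the statement that $\delta$ meets $\Delta_M$ cleanly. This is where I anticipate the main obstacle: when orbit dimension jumps, $R$ is no longer a submanifold near the units, the slice picture breaks down, and the preimage genuinely fails to be smooth.

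Finally I would check that the groupoid operations descend. If $\alpha(g)=\beta(g)$, $\alpha(h)=\beta(h)$ and $\alpha(g)=\beta(h)$, then $\alpha(gh)=\alpha(h)=\beta(h)=\alpha(g)=\beta(g)=\beta(gh)$, so $m$ maps $(I_G)_2$ into $I_G$; likewise $\iota(I_G)\subseteq I_G$ and $1(M)\subseteq I_G$. Since $I_G$ is an embedded submanifold of $G$ and $(I_G)_2$ is an embedded submanifold of $I_G\times I_G$, all these restrictions are automatically smooth. It remains to see that $\alpha|_{I_G}=\beta|_{I_G}:I_G\longrightarrow M$ is a submersion: it carries the smooth global section $1$, so its differential is onto along the unit section, and by the local triviality of the group bundle $I_G$ furnished by regularity it has constant rank, hence is a submersion everywhere. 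This makes $I_G\rightrightarrows M$ a Lie groupoid, completing the proof.
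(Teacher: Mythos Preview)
The paper does not actually prove this lemma: its entire proof reads ``see \cite{Schmeding}''. So there is nothing to compare at the level of argument --- the paper simply defers to an external reference, whereas you have supplied a self-contained proof sketch.

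Your outline is the standard direct route and is essentially sound: identify $I_G=\delta^{-1}(\Delta_M)$ for $\delta=(\beta,\alpha)$, use constant rank to show this preimage is an embedded submanifold, and then check that the structural maps restrict. The one place I would tighten is the clean-intersection step. The constant-rank theorem alone does not guarantee that the preimage of an arbitrary submanifold is smooth; you correctly isolate the extra ingredient, namely that $\Delta_M$ lies inside the image $R$ of $\delta$ and is a submanifold of it. To make this airtight, argue locally in foliation charts: near a unit $1(p)$, choose coordinates $(u,v)$ on $M$ adapted to the orbit foliation (so orbits are $v=\mathrm{const}$), and observe that in the induced coordinates on $M\times M$ the image $R$ is locally $\{v_1=v_2\}$ while $\Delta_M$ is $\{u_1=u_2,\ v_1=v_2\}$; this makes $\Delta_M\subset R$ visibly a submanifold of codimension $q$, and the pullback along the submersion $\delta:G\to R$ is then automatic. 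With that detail filled in, your argument is complete and is in fact what the cited reference does.
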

\begin{proof}
see \cite{Schmeding}
\end{proof}\\

 For Lie groupoid $G \rightrightarrows M$ we denote the associated isotropy Lie groupoid by $I_{G}$ and the Lie algebroid associated to isotropy Lie groupoid by $A I_{G}$ and call it isotropy Lie algebroid.

Let $ G \rightrightarrows M $ be a Lie groupoid and $I_{G}$ be its associated isotropy Lie groupoid. $G$ acts smoothly from the left on $ J: I_{G} \longrightarrow M $ by conjugation, $$ C(g) (g^{\prime} ):= g g^{\prime} g^{-1}.$$

 On the other hand, the conjugation action induces an action of a Lie groupoid $G$ on $ A I_{G} \longrightarrow M $. We call this action \textbf{adjoint action} of $G$ on $A I_{G}$ which can be defined as 
 $$Ad: G \times A I_{G} \longrightarrow AI_{G} $$
$$Ad_{g} X := \dfrac{d}{dt} \Big\vert_{t=0}~ C(g) Exp (tX)$$
where  $p \in M$,\; $g \in G_{p} = \alpha^{-1} (p) $ (the $\alpha$-fibers over $p$)\;  and\;  $X \in (A I_{G})_p.$\\

According to remark \ref{action remark}, the action $Ad$ induces an adjoint action of $AG $ on $ A I_{G}  \longrightarrow M $ as 
$$ad: AG \times AI_G \to AI_G$$
$$ ad_XY=ad (X)  (Y) := \dfrac{d}{dt} \Big\vert_{t=0}~ Ad (Exp (tX) ) Y $$
where $ X \in (AG)_{p} ,~ Y \in (A I_{G})_{p}$ and $ p \in M.$ \\
It is clear that $ X \in \Gamma(AG) $ and $Y \in  \Gamma(A I_{G} )$
$$ ad_X (Y) = [\vert X , Y \vert].$$ 

Another action of $G$  on dual bundle $A^{\ast} I_{G} $ which is called \textbf{co-adjoint action} of $G,$ for $ g \in G_{p},~ \xi \in (A^{\ast} I_{G})_p,$ is defined as 
$$ Ad^{\ast} : G \times A^{\ast} I_{G} \longrightarrow A^{\ast} I_{G}  $$
$$ Ad_{g} ^{\ast} \xi (X) := \xi ( Ad_{g^{-1}} X ).  $$
In other words $$ ~~~~~~~~\langle Ad_{g} ^{\ast} \xi , X \rangle = \langle \xi , Ad_{g^{-1}} X\rangle. $$
Also, according to remark \ref{action remark}, the action $Ad^{\ast}$ induces so-called co-adjoint action of a Lie algebroid $AG$ on $A^{\ast} I_{G} $ which for $ \xi \in (A^{\ast} I_{G})_p $ is defined by
$$ ad^{\ast} : AG \times A^{\ast} I_{G}  \longrightarrow A^{\ast} I_{G} $$
$$ ad_{X} ^{\ast} \xi (Y) := \xi ( ad_{-X} (Y) ) = \xi ( [\vert Y,X \vert] ) $$
or $$ ~~~~~~~~\langle ad_{X} ^{\ast} \xi , Y  \rangle = \langle \xi , ad (- X) Y \rangle. $$
%=============================================================================================
\section{Co-adjoint Lie groupoid and co-adjoint Lie algebroid}
In this section, we briefly review concepts about the Lie groupoid corresponding to the orbits of co-adjoint action of Lie groupoid and its associated Lie algebroid. One can refer to \cite{H A 2} to see and study more details.\\

\begin{definition}
\label{co orbit}
Let $G \rightrightarrows M $ be a  Lie groupoid with source, target, multiplication, unit and inverse mapping $\alpha, ~ \beta, ~ m,~1,~ \iota,$ respectively. Suppose that $\xi$ is an element of $(A^{\ast} I_{G})_{p},$ we define the orbit of co-adjoint action of a Lie groupoid $G$ as follows:
$$O(\xi) = \lbrace Ad_{g} ^{\ast} \xi ~\vert ~ g \in G \rbrace$$
\end{definition}
It turns out that for all $\xi$ that the stabilizer $G_\xi=\{g : Ad^\ast_g \xi=\xi  \}$ is a normal Lie subgroupoid of $G$, the co-adjoint orbit  $O(\xi)$ has a natural structure of a Lie groupoid.

In the later, we try to equip the groupoid structure on the co-adjoint orbit $O(\xi)$ over $M.$ 

\begin{definition}
Let $G \rightrightarrows M $ be a  Lie groupoid with structural mappings $\alpha, ~ \beta, ~ m,~1,~ \iota.$ Let the stabilizer $G_\xi=\{g : Ad^\ast_g \xi=\xi  \}$ is a normal Lie subgroupoid of $G.$
The orbit of co-adjoint action of the Lie groupoid $G$ has the groupoid structure with the following structural mappings:
\begin{enumerate}
\item source mapping: $ \alpha^{\prime}: O(\xi) \longrightarrow M ; ~~~~~~~ Ad_{g} ^{\ast} \xi \longmapsto \alpha (g), $
\item target mapping: $ \beta^{\prime}: O(\xi) \longrightarrow M ; ~~~~~~~ Ad_{g} ^{\ast} \xi \longmapsto \beta (g), $
\item multiplication mapping: $m^{\prime} : ( O(\xi) )_{2} \longrightarrow  O(\xi);~~~~~~~$ 
 $(Ad_{g} ^{\ast} \xi ,Ad_{h} ^{\ast} \xi ) \longmapsto Ad_{m(g,h)} ^{\ast} \xi = Ad_{gh} ^{\ast} \xi , $
\item unit mapping: $ 1^{\prime}: M \longrightarrow O(\xi) ; ~~~~~~~ p \longmapsto Ad_{1_{p}} ^{\ast} \xi , $
\item  inverse mapping: $ \iota^{\prime} : O(\xi) \longrightarrow O(\xi) ; ~~~~~~~ Ad_{g} ^{\ast} \xi \longmapsto Ad_{g^{-1}} ^{\ast} \xi.$
\end{enumerate}
We call $\alpha^{\prime} , \beta^{\prime} , m^{\prime}, 1^{\prime}$ and $\iota^{\prime},$ source, target, multiplication, unit and inverse mapping, respectively, for groupoid $O(\xi).$

\end{definition}

\begin{theorem}
The orbits of co-adjoint action of any regular Lie groupoid, are Lie groupoids.
\end{theorem}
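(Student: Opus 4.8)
The plan is to realise each co-adjoint orbit as the orbit of a smooth Lie groupoid action, use regularity to make that orbit an embedded submanifold, and then observe that the structural maps introduced above are precisely the ones induced from $G$ through the orbit map. To begin, I would pin down where regularity enters: by the lemma above, regularity of $G$ ensures that the isotropy groupoid $I_G$ is a Lie groupoid; consequently $AI_G$ is a genuine Lie algebroid, the dual projection $J\colon A^{\ast}I_G\to M$ is a smooth vector bundle, and (via remark~\ref{action remark}) the co-adjoint action $Ad^{\ast}$ is a smooth left action of the Lie groupoid $G$ on $J\colon A^{\ast}I_G\to M$. Regularity also forces the isotropy groups $I_p$, hence the stabilisers $G_\xi=\{g:Ad^{\ast}_g\xi=\xi\}$, to have locally constant dimension.

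Next I would invoke the orbit theorem for smooth Lie groupoid actions: the orbit $O(\xi)$ through $\xi\in(A^{\ast}I_G)_p$ is an initial submanifold of $A^{\ast}I_G$, and the orbit map $\Phi_\xi\colon g\mapsto Ad^{\ast}_g\xi$ is a smooth surjection onto $O(\xi)$ with $\Phi_\xi^{-1}(\xi)=G_\xi$. Because $G$ is regular, $G_\xi$ is closed and of constant rank, so it is an embedded Lie subgroupoid and $\Phi_\xi$ factors through a diffeomorphism $G/G_\xi\cong O(\xi)$; in particular $O(\xi)$ is an embedded submanifold of $A^{\ast}I_G$ and $\Phi_\xi$ is a surjective submersion. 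This equips $O(\xi)$ with a smooth manifold structure, with $M$ unchanged.

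Then I would verify the Lie groupoid axioms for $O(\xi)\rightrightarrows M$. By the very definition of the structural maps one has $\alpha'\circ\Phi_\xi=\alpha$, $\beta'\circ\Phi_\xi=\beta$, $1'=\Phi_\xi\circ 1$, $\iota'\circ\Phi_\xi=\Phi_\xi\circ\iota$, and $m'\circ(\Phi_\xi\times\Phi_\xi)=\Phi_\xi\circ m$ on composable pairs — the last identity using the standing hypothesis that $G_\xi$ is normal, exactly as in the construction of $m'$. Since $\Phi_\xi$ (and $\Phi_\xi\times\Phi_\xi$) is a surjective submersion and the right-hand sides are smooth and constant on $G_\xi$-cosets, the universal property of surjective submersions yields smoothness of $\alpha',\beta',1',\iota',m'$; and since $\alpha,\beta$ are submersions while $\Phi_\xi$ is a surjective submersion, $\alpha'$ and $\beta'$ are submersions as well. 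Hence $O(\xi)\rightrightarrows M$ is a Lie groupoid.

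The step I expect to be the real obstacle is the second one: proving that $O(\xi)$ is an embedded submanifold and that the set-theoretic identification $O(\xi)\cong G/G_\xi$ is an isomorphism of \emph{smooth} groupoids rather than merely of abstract groupoids. For Lie groups the analogous facts are classical (closed subgroups are Lie subgroups, and quotients by closed normal subgroups are Lie groups), but in the groupoid setting closedness and constant rank of $G_\xi$ are not automatic and rely essentially on the regularity hypothesis; one must also check that the quotient smooth structure on $G/G_\xi$ coincides with the submanifold structure $O(\xi)$ inherits from $A^{\ast}I_G$, so that the two descriptions of $O(\xi)$ are compatible.
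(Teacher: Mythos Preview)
The paper does not actually prove this theorem: its entire proof reads ``see \cite{H A 2}'', deferring the argument to the companion article. So a line-by-line comparison with the paper's own reasoning is not possible from this text. What can be said is that your outline is consistent with the set-up the paper provides before the theorem: the structural maps $\alpha',\beta',m',1',\iota'$ on $O(\xi)$ are defined exactly so that $\Phi_\xi\colon g\mapsto Ad^{\ast}_g\xi$ intertwines them with $\alpha,\beta,m,1,\iota$, and the standing hypothesis that $G_\xi$ be a normal Lie subgroupoid is stated explicitly just above the theorem. Your strategy of realising $O(\xi)$ as $G/G_\xi$ and pushing the smooth structure and groupoid maps through the surjective submersion $\Phi_\xi$ is the natural one and almost certainly what \cite{H A 2} does.

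One caution on your sketch: the inference ``regularity forces the isotropy groups $I_p$, hence the stabilisers $G_\xi$, to have locally constant dimension'' is too quick. Regularity of $G$ gives constant rank of $(\beta,\alpha)$ and hence constant dimension of the $I_p$, but the co-adjoint stabiliser $G_\xi$ sits inside the $\alpha$-fibres and its dimension depends on $\xi$, not only on the base point; constancy of $\dim G_\xi$ along the orbit is what you really need, and that comes from the transitivity of the action on $O(\xi)$ (conjugate stabilisers), not directly from regularity. You flagged this circle of issues yourself as the ``real obstacle'', and that diagnosis is accurate: the passage from the set-theoretic bijection $G/G_\xi\to O(\xi)$ to a diffeomorphism, together with the existence of a smooth quotient groupoid structure on $G/G_\xi$, is where the work lies and where the normality hypothesis on $G_\xi$ is genuinely used.
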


\begin{proof}
see \cite{H A 2}.
\end{proof}

We will call this Lie groupoid \textbf{co-adjoint Lie groupoid} and for simplicity we denote it by $\mathcal{G}  \rightrightarrows M $.\\
 
 \begin{lemma} 
Let $G \rightrightarrows M$ be a Lie groupoid and $m$ be its multiplication mapping. Suppose that $X \in T_{g} G,$ $ Y \in T_{h} G$ where $T \alpha (X) = T \beta (Y).$ For arbitrary members $\eta_{1} = ad_{X} ^{\ast} \xi,~ \eta_{2} = ad_{Y} ^{\ast} \xi $ such that $T \alpha^{\prime} ( \eta_{1}  ) = T \beta^{\prime} ( \eta_{2} ), $ we have that
\begin{eqnarray}
\label{Tm}
T m^{\prime} ( \eta_{1} , \eta_{2} ) = ad_{Tm ( X,Y)} ^{\ast} \xi,
\end{eqnarray}
where $m^{\prime}$ is the multiplication mapping of the co-adjoint Lie gruopoid $ \mathcal{G}  \rightrightarrows M.$
\end{lemma}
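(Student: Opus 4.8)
The plan is to reduce the identity \eqref{Tm} to the multiplicativity of the co-adjoint map $g\mapsto Ad^\ast_g\xi$ together with the definitions of the structural mappings of $\mathcal{G}$. First I would observe that the tangent vectors $\eta_1=ad^\ast_X\xi$ and $\eta_2=ad^\ast_Y\xi$ are, by Remark \ref{action remark} and the definition of $ad^\ast$, exactly the velocity vectors of curves in $O(\xi)$ of the form $t\mapsto Ad^\ast_{g(t)}\xi$ and $t\mapsto Ad^\ast_{h(t)}\xi$, where $g(t),h(t)$ are curves in $G$ with $g(0)=g$, $h(0)=h$, $g'(0)=X$, $h'(0)=Y$. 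Concretely, differentiating $Ad^\ast_{g(t)}\xi$ at $t=0$ and using the cocycle property of $Ad^\ast$ one gets $\frac{d}{dt}\big|_{t=0}Ad^\ast_{g(t)}\xi = ad^\ast_X\xi$ in the appropriate fibre, so these are legitimate representatives.

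Next I would use that the hypotheses $T\alpha(X)=T\beta(Y)$ and $T\alpha'(\eta_1)=T\beta'(\eta_2)$ guarantee that the pair of curves $(g(t),h(t))$ can be chosen to lie in $G_2$ for $t$ near $0$ (composability is an open-and-closed condition along a connected curve once it holds at $t=0$, since $\alpha(g(t))=\beta(h(t))$ follows from matching the derivatives and adjusting the curves within their $\alpha$- and $\beta$-fibres). Then the composite curve $t\mapsto m(g(t),h(t))$ is a curve in $G$ through $gh$ with velocity $Tm(X,Y)$ at $t=0$. Now apply the definition of $m'$: for each $t$, $m'\big(Ad^\ast_{g(t)}\xi,\,Ad^\ast_{h(t)}\xi\big)=Ad^\ast_{m(g(t),h(t))}\xi$. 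Differentiating both sides at $t=0$, the left-hand side gives $Tm'(\eta_1,\eta_2)$ by the chain rule, and the right-hand side gives $\frac{d}{dt}\big|_{t=0}Ad^\ast_{m(g(t),h(t))}\xi = ad^\ast_{Tm(X,Y)}\xi$ by the same computation as in the first step applied to the curve $m(g(t),h(t))$. This yields \eqref{Tm}.

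The main obstacle I anticipate is the bookkeeping of base points and fibres: $ad^\ast_X\xi$ is only defined when $X\in(AG)_p$ and $\xi\in(A^\ast I_G)_p$, whereas here $X\in T_gG$ is an arbitrary tangent vector, so one must first make sense of the notation $ad^\ast_X\xi$ for such $X$ — presumably via the right-translation identification $T_gG\cong (AG)_{\beta(g)}\oplus T_pM$ or by pulling $X$ back to the unit section, and one must check that \eqref{Tm} is independent of the choices made in this identification. A second, related subtlety is verifying that the chosen curves $g(t),h(t)$ remain composable; this is where regularity of $G$ (standing assumption) and the rank condition on $(\beta,\alpha)$ are used, ensuring the fibred product $G_2$ is a submanifold through which one can genuinely differentiate. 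Once these identifications are pinned down, the argument is a one-line application of the chain rule to the multiplicativity of $Ad^\ast$, so I would spend most of the write-up making the domain of $ad^\ast_{(\cdot)}\xi$ precise and the rest is routine.
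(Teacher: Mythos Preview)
Your approach is correct and is exactly the natural one: represent $\eta_1,\eta_2$ by curves $t\mapsto Ad^\ast_{g(t)}\xi$, $t\mapsto Ad^\ast_{h(t)}\xi$, use the definition $m'(Ad^\ast_{g}\xi,Ad^\ast_{h}\xi)=Ad^\ast_{gh}\xi$, and differentiate. The paper does not give an independent proof here---it simply cites \cite{H A 2}---so there is nothing to compare against in this text, but your argument is the standard chain-rule computation and almost certainly coincides with what appears in the cited source.

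Two minor comments on your write-up. First, your worry about making sense of $ad^\ast_X\xi$ for $X\in T_gG$ is already resolved by the paper's own convention: in the definition of the co-adjoint Lie algebroid the tangent space is declared to be $T_\xi O(\xi)=\{ad^\ast_X\xi : X\in T_gG\}$, so in this context $ad^\ast_X\xi$ \emph{means} $\frac{d}{dt}\big|_{t=0}Ad^\ast_{g(t)}\xi$ for any curve with $g'(0)=X$. You do not need to pass through a splitting of $T_gG$; the curve definition is the intended one, and well-definedness (independence of the curve) is immediate since $Ad^\ast_{(\cdot)}\xi$ is a smooth map $G\to O(\xi)$ and you are just computing its tangent map. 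Second, composability of the curves is simpler than you suggest: since $T\alpha(X)=T\beta(Y)$, one can choose $h(t)$ first and then take $g(t)$ to be any curve with $g'(0)=X$ and $\alpha(g(t))=\beta(h(t))$ (using that $\alpha$ is a submersion). No regularity of $G$ beyond the submersion property of $\alpha,\beta$ is needed for this step.
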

\begin{proof}
See \cite{H A 2}.
\end{proof}

\begin{definition}
Let $\mathcal{G} := O(\xi) \rightrightarrows M $ be co-adjoint Lie groupoid. According to definition \ref{associated Lie algebroid}, we define its associated Lie algebroid as follows:
$$ A \mathcal{G} = ker T\alpha^{\prime} \vert_{Ad_{1_{p}} ^{\ast} \xi} = T_{\xi} O(\xi) \vert_{Ad_{1_{p}} ^{\ast} \xi} $$
 where $T_{\xi} O(\xi) = \lbrace ad_{X} ^{\ast} \xi ~ \vert ~ X \text{	~runs over the tangent space} \; T_{g} G \rbrace.$\\
  We call this Lie algebroid \textbf{co-adjoint Lie algebroid} associated with the co-adjoint Lie groupoid. 
   \end{definition}
  
 \begin{definition}
Let $g^{\prime} = Ad_{g} ^{\ast} \xi, ~ h^{\prime} = Ad_{h} ^{\ast} \xi $ be two elements of $\mathcal{G}$ where $h , g \in G.$ The left translation corresponding to $g^{\prime}$ is 
\begin{eqnarray}
L_{g^{\prime}} ^{\prime} : \mathcal{G}^{p} & \longrightarrow & \mathcal{G}^{q}  \nonumber \\
h^{\prime} & \longmapsto & L_{g^{\prime}} ^{\prime} (h^{\prime}) = Ad_{gh} ^{\ast} \xi  \nonumber
\end{eqnarray}
and the right translation corresponding to $g^{\prime}$ is  
\begin{eqnarray}
R_{g^{\prime}} ^{\prime} : \mathcal{G}_{q} & \longrightarrow & \mathcal{G}_{p}  \nonumber \\
h^{\prime} & \longmapsto & R_{g^{\prime}} ^{\prime} (h^{\prime}) = Ad_{hg} ^{\ast} \xi.   \nonumber
\end{eqnarray}
where $p, q \in M$ and $\mathcal{G}^{p}, \mathcal{G}_{p}$ are $\beta$-fiber and $\alpha$-fiber, respectively.
\end{definition}

\begin{definition}
\label{section}
Let $ G \rightrightarrows M $ be a Lie groupoid, $AG$ be its associated Lie algebroid and $\overrightarrow{X}~( resp.,\overleftarrow{X})$ be right invariant (resp., left invariant) vector field on $G$ corresponding to $ X \in \Gamma (AG).$ Consider the co-adjoint Lie groupoid $\mathcal{G}$ and its associated Lie algebroid $A \mathcal{G}$.  We define section $X^{\prime}$ of vector bundle $\tau\prime : A \mathcal{G} \longrightarrow M$ as follows:
\begin{eqnarray}
X^{\prime} : M & \longrightarrow & A \mathcal{G} \nonumber \\
x & \longmapsto & ad_{X(x)} ^{\ast} \xi.  \nonumber
\end{eqnarray}
\end{definition}

As described in \cite{H A 2}, the right invariant vector field corresponding to $X^{\prime}$ on $\mathcal{G}$ is 
\begin{eqnarray}
\overrightarrow{X^{\prime}}(g^{\prime}) &=& T R_{g^{\prime}} ^{\prime} ( X^{\prime} (\beta^{\prime} (g^{\prime}))) \nonumber  \\
&=& ad_{\overrightarrow{X} (g)} ^{\ast} \xi  \nonumber
\end{eqnarray}
where $g \in G, ~~ g^{\prime} = Ad_{g} ^{\ast} \xi \in O(\xi),$  and
\begin{eqnarray}
\overleftarrow{X^{\prime}}(g^{\prime}) &=& - T (L_{g^{\prime}} ^{\prime})  T (\iota ^{\prime}) ( X^{\prime} ( \alpha^{\prime} (g^{\prime}))  \nonumber  \\
&=& ad_{\overleftarrow{X} (g)} ^{\ast} \xi.  \nonumber
\end{eqnarray}

Now, similar to definition \ref{associated Lie algebroid}, we define Lie algebroid structure $ ([\vert ~, ~\vert]^{\prime}, \rho^{\prime} )$ on $A \mathcal{G}$ as follows:
\begin{enumerate}
\item The anchor map: For $X \in \Gamma (\tau)$ and $ x \in M,$ 
$$
\rho^{\prime}: A \mathcal{G}  \longrightarrow  TM $$
$$ \rho^{\prime} (X^{\prime})(x) = T_{1^{\prime} (x)} \beta^{\prime} (X^{\prime} (x)) 
= \rho (X(x))  $$

\item Lie bracket: Let $ X^{\prime}= ad_{X} ^{\ast} \xi $ and $ Y^{\prime}= ad_{Y} ^{\ast} \xi $, so we have that 
$$\overrightarrow{[\vert ad_{X} ^{\ast} \xi , ad_{Y} ^{\ast} \xi} \vert ]^{\prime} = [ \overrightarrow{ ad_{X} ^{\ast} \xi } , \overrightarrow{ ad_{Y} ^{\ast} \xi } ]~~~~~( [\vert \overleftarrow{X^{\prime},Y^{\prime}} \vert]^{\prime} = - [ \overleftarrow{X^{\prime}} , \overleftarrow{Y^{\prime}} ] )$$
\end{enumerate}
Note that the right-hand bracket is the bracket on the vector fields.
 \begin{lemma}
 Consider the Lie algebroids $( AG, [\vert ~,~ \vert] , \rho )$ and $( A\mathcal{G}, [\vert ~,~ \vert]^{\prime} , \rho^{\prime} )$ and let $ X , Y \in \Gamma (AG)$ and $ X^{\prime} , Y^{\prime} \in \Gamma (A\mathcal{G})$,  then
 $$[\vert \overrightarrow{X^{\prime} , Y^{\prime}} \vert]^{\prime} = ad_{\overrightarrow{ [\vert X , Y \vert]}} ^{\ast} \xi . $$
\end{lemma}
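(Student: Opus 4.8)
The plan is to realise both sides as vector fields on $\mathcal{G}$ that are related, through one and the same smooth map, to a single vector field on $G$, and then to invoke the naturality of the Lie bracket of vector fields. Consider the canonical surjection
$$\Phi : G \longrightarrow \mathcal{G}=O(\xi),\qquad \Phi(g)=Ad_{g}^{\ast}\xi .$$
Since $Ad_{gh}^{\ast}=Ad_{g}^{\ast}\circ Ad_{h}^{\ast}$, the map $\Phi$ is a morphism of Lie groupoids over $\mathrm{id}_{M}$: one has $\alpha^{\prime}\circ\Phi=\alpha$, $\beta^{\prime}\circ\Phi=\beta$, $\Phi(gh)=m^{\prime}(\Phi(g),\Phi(h))$, and hence $\Phi\circ R_{g}=R_{\Phi(g)}^{\prime}\circ\Phi$ for every $g\in G$. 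Moreover, differentiating $\Phi\big(Exp(tX)_{p}\big)=Ad_{Exp(tX)_{p}}^{\ast}\xi$ in $t$ at $t=0$ gives $T_{1_{p}}\Phi\big(X(p)\big)=ad_{X(p)}^{\ast}\xi=X^{\prime}(p)$ for every $X\in\Gamma(AG)$ and $p\in M$ (this is the defining relation between $ad^{\ast}$ and $Ad^{\ast}$); thus $\Phi$ carries the unit-values of a section $X$ of $AG$ to the unit-values of the associated section $X^{\prime}$ of $A\mathcal{G}$.

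Combining these facts with $\overrightarrow{X}(g)=TR_{g}\big(X(\beta g)\big)$ and $\overrightarrow{X^{\prime}}(g^{\prime})=TR_{g^{\prime}}^{\prime}\big(X^{\prime}(\beta^{\prime}g^{\prime})\big)$, we obtain, with $g^{\prime}:=\Phi(g)$,
\begin{align*}
T_{g}\Phi\big(\overrightarrow{X}(g)\big)
&=TR_{g^{\prime}}^{\prime}\Big(T_{1_{\beta g}}\Phi\big(X(\beta g)\big)\Big)
=TR_{g^{\prime}}^{\prime}\big(X^{\prime}(\beta^{\prime}g^{\prime})\big)\\
&=\overrightarrow{X^{\prime}}(g^{\prime})=ad_{\overrightarrow{X}(g)}^{\ast}\xi .
\end{align*}
Hence $\overrightarrow{X}$ and $\overrightarrow{X^{\prime}}$ are $\Phi$-related (this also recovers the formula for $\overrightarrow{X^{\prime}}$ quoted from \cite{H A 2}), and the same computation, applied to an arbitrary section $Z\in\Gamma(AG)$, shows that $\overrightarrow{Z}$ is $\Phi$-related to $\overrightarrow{Z^{\prime}}=ad_{\overrightarrow{Z}}^{\ast}\xi$.

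Now apply this to $Z=[\vert X,Y\vert]$. From $\overrightarrow{X}\sim_{\Phi}\overrightarrow{X^{\prime}}$, $\overrightarrow{Y}\sim_{\Phi}\overrightarrow{Y^{\prime}}$ and the naturality of the Lie bracket of vector fields under a smooth map, $[\overrightarrow{X},\overrightarrow{Y}]\sim_{\Phi}[\overrightarrow{X^{\prime}},\overrightarrow{Y^{\prime}}]$. By the definition of the algebroid bracket on $AG$ we have $[\overrightarrow{X},\overrightarrow{Y}]=\overrightarrow{[\vert X,Y\vert]}$, and by the previous step $\overrightarrow{[\vert X,Y\vert]}\sim_{\Phi}ad_{\overrightarrow{[\vert X,Y\vert]}}^{\ast}\xi$. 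Therefore the vector fields $[\overrightarrow{X^{\prime}},\overrightarrow{Y^{\prime}}]$ and $ad_{\overrightarrow{[\vert X,Y\vert]}}^{\ast}\xi$ on $\mathcal{G}$ are both $\Phi$-related to the single vector field $\overrightarrow{[\vert X,Y\vert]}$ on $G$; since $\Phi$ is onto, a vector field on $\mathcal{G}$ is determined by its values along $\Phi(G)=\mathcal{G}$, so these two vector fields coincide. As $[\vert\overrightarrow{X^{\prime},Y^{\prime}}\vert]^{\prime}$ is, by the definition of the bracket on $A\mathcal{G}$, precisely the (right-invariant) field $[\overrightarrow{X^{\prime}},\overrightarrow{Y^{\prime}}]$ on $\mathcal{G}$, this is exactly the asserted identity $[\vert\overrightarrow{X^{\prime},Y^{\prime}}\vert]^{\prime}=ad_{\overrightarrow{[\vert X,Y\vert]}}^{\ast}\xi$.

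The only delicate point is the set-up of $\Phi$: one must know that $\Phi$ is a well-defined smooth surjection onto $\mathcal{G}$, that $T_{1_{p}}\Phi$ restricts on $(AG)_{p}$ to $X(p)\mapsto ad_{X(p)}^{\ast}\xi$, and that $\Phi$ intertwines the right translations $R_{g}$ and $R_{\Phi(g)}^{\prime}$. These rest on the standing hypothesis that the stabilizer $G_{\xi}$ is a normal Lie subgroupoid and on the identification of $\mathcal{G}$ with the corresponding quotient, both implicit in the constructions of \cite{H A 2}; once they are granted, the rest of the argument is formal. An alternative that avoids $\Phi$ is to evaluate $[\overrightarrow{X^{\prime}},\overrightarrow{Y^{\prime}}]$ pointwise from its Lie-derivative definition, transporting the flows of $\overrightarrow{X^{\prime}},\overrightarrow{Y^{\prime}}$ (which are carried by the flows of $\overrightarrow{X},\overrightarrow{Y}$ through the co-adjoint action), but this merely re-proves the $\Phi$-relatedness by hand.
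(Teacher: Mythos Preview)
The paper does not actually prove this lemma here; its ``proof'' is the single line ``See \cite{H A 2}.'' So there is no argument in the present paper to compare against.

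Your proof is correct and self-contained. Recognising $\Phi(g)=Ad_{g}^{\ast}\xi$ as a morphism of Lie groupoids over $\mathrm{id}_{M}$, showing that $T_{g}\Phi$ sends $\overrightarrow{X}(g)$ to $ad_{\overrightarrow{X}(g)}^{\ast}\xi=\overrightarrow{X^{\prime}}(\Phi(g))$, and then invoking the naturality of the Lie bracket of vector fields under $\Phi$-relatedness is exactly the right mechanism; the surjectivity of $\Phi$ is what lets you conclude that the two $\Phi$-related candidates on $\mathcal{G}$ coincide. Your final paragraph correctly isolates the only nontrivial inputs (smoothness and well-definedness of $\Phi$, i.e.\ the standing hypothesis that $G_{\xi}$ is a normal Lie subgroupoid so that $\mathcal{G}\cong G/G_{\xi}$), and these are precisely the assumptions under which the co-adjoint Lie groupoid is constructed in \cite{H A 2}. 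In short, your argument is a clean conceptual proof; without access to \cite{H A 2} one cannot say whether it matches the original line-by-line, but it is the natural proof and almost certainly equivalent in spirit.
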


\begin{proof}
See \cite{H A 2}. 
\end{proof}

%=============================================================================================
\section{Basis of sections and structural functions}

In this section, we want to analyze the relationship between the structure functions of the Lie algebroid $AG$ of the Lie groupoid $G \rightrightarrows M$ and the co-adjoint Lie algebroid $A \mathcal{G},$ i.e the Lie algebroid of co-adjoint Lie groupoid $\mathcal{G} \rightrightarrows M$ associated to Lie groupoid $G \rightrightarrows M$ (see \cite{H A 2}). For this purpose, after expressing concepts about the basis of a sections of vector bundles, we compare structure functions between these Lie algebroids.\\

Let $\tau : A \longrightarrow M$ be a vector bundle of rank $k,$ i.e. for all $p \in M,$ every fiber $ A_{p} = \tau^{-1} (p)$ which is a finite-dimensional vector space, has a constant dimensional $k.$ Suppose that $ U \subset M$ be an open set and $\Gamma (U)$ be the space of sections of the restriction of the bundle A to U, i.e., of the vector bundle $\tau^{-1} (U) \longrightarrow U.$

\begin{definition}
\label{basis of section}

A basis of sections for A is a set of sections $e_{1}, ..., e_{k} \in \Gamma (A)$ such that the vectors $e_{1} (p), ..., e_{k} (p) \in A_{p}$ is a basis in $A_{p}$ for each $p \in M.$
\end{definition}

As it mentioned in \cite{voronov}, if $e_{\alpha}$ be a basis of sections, then each section $X \in \Gamma (A)$ is uniquely written as $X = e_{\alpha} X^{\alpha}$ where $X^{\alpha} \in C^{\infty} (M).$

\begin{remark}
The vector bundle A admits basis of sections if and only if A is trivial. Recall that the triviality of the vector bundle A means that there is a diffeomorphism $\phi: A \longrightarrow M \times \mathbb{R}^{k}$ which maps fibers to fibers and has a linear property on each fiber. \\
Every vector bundle has a local basis of sections, that's mean for every $p \in M,$ there exists an open neighborhood $U,~ x \in U$ such that every section of  $\Gamma (U)$ admits basis of sections.\\
Every basis of sections over $U_{i}$ is called local basis for A. Every (global) section $X \in \Gamma (A)$ can be considered as restricted to U and its expansion over $e_{(i)_{\alpha}}$ as follows:
$$X = e_{(i)_{\alpha}} X_{i} ^{(\alpha)}$$
where $X_{i} ^{(\alpha)} \in ^{\infty}(M)$ (see \cite{voronov} for more details).
\end{remark}

Now, consider the  Lie algebroid $ ( \tau : AG \longrightarrow M, \rho, [\vert~,~\vert] ) $ associated to Lie groupoid $G \rightrightarrows M$ with non-zero bracket. Let $X \in \Gamma (AG)$ be a section of  the Lie algebroid $AG$ and $\lbrace e_{\alpha} \rbrace,$ where $ \alpha = 1,..., dim AG,$ be local basis of sections of $AG.$ So, according to definition (\ref{basis of section}), 
\begin{eqnarray}
\label{basis}
X = e_{\alpha} X^{\alpha}
\end{eqnarray} 
 where $X^{\alpha} \in C^{\infty} (U)$ and $U$ is an open set in $M.$
 
Furthermore, suppose that $(x^{i})$ be local coordinates on $M$ and $\lbrace e_{\alpha} \rbrace$ be a local basis of sections for $AG.$ Consider local functions $\rho_{\alpha} ^{i}, C_{\alpha \beta} ^{\gamma}$ on $M$ which are called structure functions of Lie algebroid and given by
$$\rho (e_{\alpha} ) = \rho_{\alpha} ^{i} \dfrac{\partial}{\partial x^{i}}$$
and
$$[\vert e_{\alpha}, e_{\beta} \vert] = C_{\alpha\beta} ^{\gamma} e_{\gamma}.$$

Now, consider co-adjoint Lie algebroid $( \tau^{\prime}: A\mathcal{G} \longrightarrow M, \rho^{\prime}, [\vert ~,~ \vert]^{\prime} )$  of the co-adjoint Lie groupoid $\mathcal{G} := O(\xi) \rightrightarrows M.$ Let $X^{\prime} \in \Gamma (A \mathcal{G})$ be a section of the Lie algebroid $A \mathcal{G}.$  As explained in the definition of sections of  co-adjoint Lie algebroid in \cite{H A 2}, we have that $X^{\prime} = ad^{\ast} _{X} \xi$ which $X \in \Gamma (AG).$ Due to the well-known property of the section we have that $\tau^{\prime} \circ X^{\prime} = id_{M} : M \longrightarrow M.$ For every $p \in M,$ we define $( \tau^{\prime} \circ ad^{\ast} _{X}) (p) =( \tau \circ X ) (p), $ so
$$ ( \tau^{\prime} \circ X^{\prime}) (p)=\tau^{\prime} \big( X^{\prime} (p) \big) =\tau^{\prime} \big( ad^{\ast} _{X (p)} \big) = ( \tau \circ X ) (p) = p $$ Therefore, by substituting equation (\ref{basis}) and using the linear property of $ad^{\ast}$, we have
$$X^{\prime} = ad^{\ast} _{X} \xi = ad^{\ast} _{e_{\alpha} X^{\alpha}} \xi  = X^{\alpha} ad^{\ast} _{e_{\alpha}} \xi. $$

\begin{definition}
\label{basis af section for co-adjoint}
Let $\lbrace e_{\alpha} \rbrace$ be basis of section for $AG.$ We define the basis of sections for co-adjoint Lie algebroid as follows:
$$e^{\prime} _{\alpha} = ad^{\ast} _{e_{\alpha}} \xi$$
Therefore, similar to what was stated above, every section $X^{\prime} \in \Gamma (A \mathcal{G} )$ is written as follows:
$$X^{\prime}  = X^{\alpha} e^{\prime} _{\alpha}  = X^{\alpha} ad^{\ast} _{e_{\alpha}} \xi$$
where $X^{\alpha} \in C^{\infty} ( U \subset M).$
\end{definition}

\begin{lemma}
Structure functions of the Lie algebroids $AG$ and $A \mathcal{G}$ are equal, i.e. if  $\rho_{\alpha} ^{i}, C_{\alpha \beta} ^{\gamma}$ be structure functions of the Lie algebroid $AG$ and $\rho_{\alpha} ^{\prime^{i}}, C_{\alpha \beta} ^{\prime^{\gamma}}$ be structure functions of Lie algebroid $A \mathcal{G},$ then 
$$\rho_{\alpha} ^{i} = \rho_{\alpha} ^{\prime^{i}}~~~~~~~~~and ~~~~~~~~~ C_{\alpha \beta} ^{\gamma} = C_{\alpha \beta} ^{\prime^{\gamma}} . $$
\end{lemma}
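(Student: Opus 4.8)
The plan is to compute both pairs of structure functions directly from their defining relations, using the explicit formulas established earlier in the excerpt for the anchor $\rho^{\prime}$ and the bracket $[\vert~,~\vert]^{\prime}$ of the co-adjoint Lie algebroid in terms of those of $AG$. Throughout I would fix local coordinates $(x^{i})$ on $M$ and a local basis of sections $\{e_{\alpha}\}$ of $AG$, and take $\{e^{\prime}_{\alpha}\}$ with $e^{\prime}_{\alpha} = ad^{\ast}_{e_{\alpha}}\xi$ as the local basis of sections of $A\mathcal{G}$ provided by Definition \ref{basis af section for co-adjoint}; I would first remark that this is indeed a basis of sections in the sense of Definition \ref{basis of section}, since the map $X \mapsto ad^{\ast}_{X}\xi$ is fibrewise linear and, on the relevant orbit, the $e^{\prime}_{\alpha}(p)$ span $(A\mathcal{G})_{p} = T_{\xi}O(\xi)$.

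First I would treat the anchor. By the definition of $\rho^{\prime}$ recorded above, for each basis section we have $\rho^{\prime}(e^{\prime}_{\alpha})(x) = T_{1^{\prime}(x)}\beta^{\prime}(e^{\prime}_{\alpha}(x)) = \rho(e_{\alpha}(x))$, where the last equality is exactly the identity $\rho^{\prime}(X^{\prime})(x) = \rho(X(x))$ from the construction of the co-adjoint Lie algebroid structure, applied to $X = e_{\alpha}$. Expanding the right-hand side in the coordinate frame gives $\rho(e_{\alpha}) = \rho_{\alpha}^{i}\,\partial/\partial x^{i}$, hence $\rho^{\prime}(e^{\prime}_{\alpha}) = \rho_{\alpha}^{i}\,\partial/\partial x^{i}$, and comparing with the defining relation $\rho^{\prime}(e^{\prime}_{\alpha}) = \rho_{\alpha}^{\prime\,i}\,\partial/\partial x^{i}$ yields $\rho_{\alpha}^{i} = \rho_{\alpha}^{\prime\,i}$.

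Next I would treat the bracket. Applying the preceding lemma (which gives $[\vert \overrightarrow{X^{\prime}, Y^{\prime}}\vert]^{\prime} = ad^{\ast}_{\overrightarrow{[\vert X, Y\vert]}}\xi$, equivalently $[\vert X^{\prime}, Y^{\prime}\vert]^{\prime} = ad^{\ast}_{[\vert X, Y\vert]}\xi$ on sections) to $X = e_{\alpha}$, $Y = e_{\beta}$, I get $[\vert e^{\prime}_{\alpha}, e^{\prime}_{\beta}\vert]^{\prime} = ad^{\ast}_{[\vert e_{\alpha}, e_{\beta}\vert]}\xi = ad^{\ast}_{C_{\alpha\beta}^{\gamma} e_{\gamma}}\xi$. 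Using $C^{\infty}(M)$-linearity of $X \mapsto ad^{\ast}_{X}\xi$ in the way already exploited in the text (pulling the functions $C_{\alpha\beta}^{\gamma}$ outside), this equals $C_{\alpha\beta}^{\gamma}\,ad^{\ast}_{e_{\gamma}}\xi = C_{\alpha\beta}^{\gamma} e^{\prime}_{\gamma}$. Comparing with $[\vert e^{\prime}_{\alpha}, e^{\prime}_{\beta}\vert]^{\prime} = C_{\alpha\beta}^{\prime\,\gamma} e^{\prime}_{\gamma}$ and using that $\{e^{\prime}_{\gamma}\}$ is a basis of sections gives $C_{\alpha\beta}^{\gamma} = C_{\alpha\beta}^{\prime\,\gamma}$, completing the proof.

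The only real subtlety — and the step I expect to need the most care — is the passage $ad^{\ast}_{C_{\alpha\beta}^{\gamma} e_{\gamma}}\xi = C_{\alpha\beta}^{\gamma}\,ad^{\ast}_{e_{\gamma}}\xi$ together with the legitimacy of reading off coefficients from the expansion in $\{e^{\prime}_{\gamma}\}$: one must make sure that the pointwise linear map $A I_{G}|_{p} \to A^{\ast}I_{G}|_{p}$ underlying $ad^{\ast}$ interacts correctly with multiplication by functions and that the resulting local frame $\{e^{\prime}_{\gamma}\}$ really is pointwise independent on the base, so that the coefficient identities are forced rather than merely one possible solution. Both points are already implicit in Definition \ref{basis af section for co-adjoint} and in the construction of $A\mathcal{G}$, so no new machinery is needed; it is essentially a matter of assembling the earlier identities in the right order.
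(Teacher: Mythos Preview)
Your proposal is correct and follows essentially the same approach as the paper: both use the identity $\rho^{\prime}(X^{\prime}) = \rho(X)$ applied to $X = e_{\alpha}$ for the anchor, and the earlier lemma $[\vert X^{\prime}, Y^{\prime}\vert]^{\prime} = ad^{\ast}_{[\vert X, Y\vert]}\xi$ applied to the basis sections, together with linearity of $ad^{\ast}$, to read off the bracket structure functions. Your additional remarks on the subtleties (the $C^{\infty}(M)$-linearity of $X \mapsto ad^{\ast}_{X}\xi$ and the pointwise independence of $\{e^{\prime}_{\gamma}\}$) are not made explicit in the paper's proof but are indeed the points where care is warranted.
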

\begin{proof}
Let $(x^{i})$ be local coordinates on $M$ and $ \lbrace e_{\alpha} \rbrace $ be local basis of sections for $AG$ and $\lbrace e^{\prime} _{\alpha} \rbrace$ be local basis of sections for $A\mathcal{G}.$ So, $\rho (e_{\alpha} ) = \rho_{\alpha} ^{i} \dfrac{\partial}{\partial x^{i}}$ and $\rho^{\prime} (e^{\prime} _{\alpha} ) = \rho_{\alpha} ^{\prime^{i}} \dfrac{\partial}{\partial x^{i}}. $ Furthermore, for every $X \in \Gamma (AG)$ and $X^{\prime} \in \Gamma (A \mathcal{G}_{\xi} ),$ we have $\rho (X) = \rho^{\prime} (X^{\prime}).$ So
\begin{eqnarray}
\rho^{\prime} (e^{\prime} _{\alpha} ) &=& \rho^{\prime} (ad^{\ast} _{e_{\alpha}} \xi ) \nonumber  \\
&=& \rho (e_{\alpha} )  \nonumber \\
&=& \rho_{\alpha} ^{i} \dfrac{\partial}{\partial x^{i}}. \nonumber
\end{eqnarray}
Therefore $ \rho_{\alpha} ^{\prime^{i}} = \rho_{\alpha} ^{i}. $\\

On the other hand, let $\lbrace e_{\alpha} \rbrace$ be a local basis of sections for the Lie algebroid $AG,$ so $ [\vert e_{\alpha}, e_{\beta} \vert] = C_{\alpha\beta} ^{\gamma} e_{\gamma}.$ Also, suppose that $\lbrace e^{\prime} _{\alpha}\rbrace$ be a local basis of sections of Lie algebroid $A \mathcal{G},$ so $[\vert e^{\prime} _{\alpha}, e^{\prime} _{\beta} \vert]^{\prime} = C_{\alpha\beta} ^{\prime^{\gamma}} e^{\prime} _{\gamma}.$\\
As it mentioned in \cite{H A 2}, for Lie algebroid $( AG, [\vert ~,~ \vert] , \rho )$ and co-adjoint Lie algebroid $( A\mathcal{G}, [\vert ~,~ \vert]^{\prime} , \rho^{\prime} ),$ there is following relation between Lie brackets of sections
 $$[\vert \overrightarrow{X^{\prime} , Y^{\prime}} \vert]^{\prime} = ad_{\overrightarrow{ [\vert X , Y \vert]}} ^{\ast} \xi $$
 where $ X , Y \in \Gamma (\tau)$ and $ X^{\prime} , Y^{\prime} \in \Gamma (\tau ^{\prime})$ and recall that there is a bijection between the space of sections $\Gamma (\tau)$ and the set of right-invariant vector fields on $G$ and denote by $\overrightarrow{X}$ the right invariant vector field corresponding to section $X.$ So
 \begin{eqnarray}
 [\vert e_{\alpha} ^{\prime} , e_{\beta} ^{\prime} \vert] &=& ad^{\ast} _{[\vert e_{\alpha} , e_{\beta} \vert]} \xi   \nonumber  \\
 &=& ad^{\ast} _{C_{\alpha \beta} ^{\gamma} e_{\gamma}}  \xi   \nonumber \\
 &=& C_{\alpha \beta} ^{\gamma} ad^{\ast} _{e_{\gamma}} \xi  \nonumber  \\
 &=& C_{\alpha \beta} ^{\gamma} e^{\prime} _{\gamma}.  \nonumber
 \end{eqnarray}
 The above calculations show that
 $$ C_{\alpha \beta} ^{\prime^{\gamma}}= C_{\alpha \beta} ^{\gamma}$$
\end{proof}

\textbf{Examples:}\\
\textbf{1.} Let $\mathfrak{G}$ be a Lie group and $M$ be a manifold. Consider trivial Lie groupoid  $G := M \times \mathfrak{G} \times M \rightrightarrows M.$ As described in \cite{Mackenzie}, the Lie algebroid associated to trivial Lie algebroid is $A G = TM \oplus ( M \times \mathfrak{g} ).$ The anchor $\rho :  TM \oplus ( M \times \mathfrak{g} ) \longrightarrow TM$ is the projection $X \oplus V \longmapsto X$ and the Lie bracket on the sections of $ TM \oplus ( M \times \mathfrak{g} )$ is given by
$[\vert X \oplus V, Y \oplus W \vert] = [X,Y] \oplus \lbrace X(W) - Y (V) + [V,W]\rbrace.$\\

The co-adjoint Lie groupoid associated to the trivial groupoid is $\mathcal{G}:= M \times O(\xi^{\prime}) \rightrightarrows M,$ where $O(\xi^{\prime}) $ is the orbit of co-adjoint action of Lie group. The Lie algebroid of this co-adjoint Lie groupoid is $A \mathcal{G}:= M \times T_{\xi^{\prime}} O(\xi^{\prime}).$ The anchor $\rho^{\prime}$ is given by
\begin{eqnarray}
\label{anchor}
&~&\rho : M \times T_{\xi^{\prime}} O (\xi^{\prime} )  \longrightarrow  TM \nonumber  \\
&~&\rho^{\prime}  (x , ad_{V} ^{\ast}  \xi^{\prime} ) (p) = X (p)
\end{eqnarray}
where $X \in \Gamma (TM)= \mathfrak{X}(M)$ is equal to $\dot{p}(0)$, $p(t)=\beta(\gamma(t))\in M$, $\gamma(t)=(p(t), Ad_{a} ^{\ast}  \xi^{\prime}, p(t) )\in \mathcal{G}$, $\frac{d}{dt}|_{t=0}(\gamma(t))=(x, ad_{V} ^{\ast}  \xi^{\prime})\in A \mathcal{G}$ and $ p(0)=p \in M.$\\ 
The Lie bracket on the space of sections of $ A \mathcal{G}_{\xi} $ is
\begin{eqnarray}
\label{bracket}
[\vert  ad_{V} ^{\ast}  \xi^{\prime} , ad_{W} ^{\ast}  \xi^{\prime}  \vert]^{\prime} = ad^{\ast} _{[V,W]} \xi^{\prime} 
\end{eqnarray}
for every $\Sigma^{\prime} _{1} = ad_{V} ^{\ast}  \xi^{\prime} ,~ \Sigma^{\prime} _{2} = ad_{W} ^{\ast}  \xi^{\prime}  \in \Gamma ( M \times T_{\xi^{\prime}} O (\xi^{\prime} ) )$ (see \cite{H A 2} for more details).\\

Let $(x^{i})$ be local coordinates on $M,~ (\dfrac{\partial}{\partial x^{i}})$ be local basis on $TM$ and $(\vartheta_{\alpha})$ be local basis on $\mathfrak{g}.$ For every section $X \oplus V \in \Gamma (A G),$ denote the local basis of these sections by $\lbrace e_{i} ^{\alpha} \rbrace$ and define as follows:
$$e_{i} ^{\alpha} = \dfrac{\partial}{\partial x^{i}} \oplus \vartheta_{\alpha} \in \Gamma (TM \oplus ( M \times \mathfrak{g} )).$$ 
Given that for standard Lie algebroid $TM \longrightarrow M,~ [ \dfrac{\partial}{\partial x^{i}} ,  \dfrac{\partial}{\partial x^{j}} ] = 0.$ Since $( \dfrac{\partial}{\partial x^{i}}) $ and $ (\vartheta_{\alpha})$ are local basis, they are independent of each other. So we will have:
\begin{eqnarray}
 [\vert e_{i} ^{\alpha}, e_{j} ^{\beta} \vert] &=&  [\vert \dfrac{\partial}{\partial x^{i}} \oplus \vartheta_{\alpha} , \dfrac{\partial}{\partial x^{j}} \oplus \vartheta_{\beta}  \vert]  \nonumber  \\
 &=& [\vartheta_{\alpha} , \vartheta_{\beta}]  \nonumber  \\
 &=& C_{(i,\alpha) (j, \beta)} ^{k,\gamma} e_{k} ^{\gamma} \nonumber  \\
 &=& \theta_{\alpha \beta} ^{\gamma} \vartheta_{\gamma} \nonumber
\end{eqnarray}
where $\theta_{\alpha \beta} ^{\gamma}$ are the structure constants of Lie algebra $\mathfrak{g}.$ In other words, $[\vert e_{i} ^{\alpha}, e_{j} ^{\beta} \vert]$ does not depend on $i,j,k$.\\

Let $e^{\prime} _{\alpha} = ad^{\ast} _{\vartheta_{\alpha}} \xi^{\prime}$ be basis of sections of $A \mathcal{G}$ and $\rho^{\prime} (e^{\prime} _{\alpha} ) = \rho^{\prime^{i}} _{\alpha} \dfrac{\partial}{\partial x^{i}}.$ According to equation (\ref{anchor}), simply conclude that $\rho^{\prime^{i}} _{\alpha} = \rho^{i} _{\alpha}.$\\

On the other hand, let  $\Sigma^{\prime} _{1} , \Sigma^{\prime} _{1} \in \Gamma (A \mathcal{G} ),$ where $\Sigma^{\prime} _{1} = ad_{V} ^{\ast}  \xi^{\prime} $ and $ \Sigma^{\prime} _{2} = ad_{W} ^{\ast}  \xi^{\prime}.$ So, according to equation (\ref{bracket}), we have
\begin{eqnarray}
[\vert  e^{\prime} _{\alpha}, e^{\prime} _{\beta} \vert]^{\prime} &=& [\vert  ad^{\ast} _{\vartheta_{\alpha}} \xi^{\prime}, ad^{\ast} _{\vartheta_{\beta}} \xi^{\prime} \vert]^{\prime}  \nonumber  \\
&=& ad^{\ast} _{[ \vartheta_{\alpha} , \vartheta_{\beta} ]} \xi^{\prime} \nonumber   \\
&=& ad^{\ast} _{\theta_{\alpha \beta} ^{\gamma} \vartheta_{\gamma}} \xi^{\prime}   \nonumber   \\
&=& \theta_{\alpha \beta} ^{\gamma} ad^{\ast} _{\vartheta_{\gamma}} \xi^{\prime}   \nonumber  \\
&=& \theta_{\alpha \beta} ^{\gamma} e^{\prime} _{\gamma}. \nonumber
\end{eqnarray}

\textbf{2.} Let $\mathfrak{G}$ be a Lie group, $\pi: P \longrightarrow M$ be a $\mathfrak{G}$-principal bundle and $\mathfrak{G}$ acts on $P \times P$ to the left. Consider the gauge groupoid $G:= \dfrac{P \times P}{\mathfrak{G}} $ over $M$ with its structural mappings, where $\dfrac{P \times P}{\mathfrak{G}}$ is the orbit space of the diagonal action of $\mathfrak{G}$ on $P \times P$ (see \cite{Mackenzie} for more details). The the Lie algebroid associated to gauge Lie algebroid is $A G = \dfrac{TP}{\mathfrak{G}}.$ The anchor $\rho :  \dfrac{TP}{\mathfrak{G}} \longrightarrow TM$ is given by the quotient of the natural projection map $T \pi: TP \longrightarrow TM,$ That is, $\rho = T \pi^{\sim}: \dfrac{T P}{\mathfrak{G}} \longrightarrow TM,$ and the space of sections of the vector bundle $ \tau: \dfrac{TP}{\mathfrak{G}} \longrightarrow \dfrac{P}{\mathfrak{G}} =M $ is given by
$\Gamma( \tau) = \lbrace X \Big\vert ~ X ~is~ \mathfrak{G}-invariant~ vector~ field~ on~ P \rbrace. $
Under this identification, the Lie bracket on the space of sections of  $\dfrac{TP}{\mathfrak{G}}$is given by the standard Lie bracket of vector fields.\\
Now, consider the co-adjoint Lie groupoid associated to the gauge groupoid. Recall that all co-adjoint orbits of the gauge groupoid $\dfrac{P \times P}{\mathfrak{G}}$ are isomorphic to $\dfrac{P \times O(\xi^{\prime})}{\mathfrak{G}},$ for a co-adjoint orbit $O(\xi^{\prime}) \subset \mathfrak{g}^{\ast}$ (see \cite{Bos}). Furthermore, $P / \mathfrak{G}$ may be identified with the $M,$ in this case, we have $ \mathcal{G}= M \times O (\xi^{\prime} ).$ So, its associated co-adjoint Lie algebroid is $A \mathcal{G} =\dfrac{P \times T_{\xi^{\prime}} O(\xi^{\prime})}{\mathfrak{G}} = M \times T_{\xi^{\prime}} O (\xi^{\prime} ) $ and we will reach the same results as the previous example.\\

Assuming that $\lbrace e_{i} \rbrace $ be local basis of sections for $AG,$ so $[ e_{i} , e_{j}] = C_{ij} ^{k} e_{k}.~$
 Let $(p_{i})$ be local coordinates on $P$ and $( \dfrac{\partial}{\partial p_{i}} )$ be $G$-invariant local basis on $T P.$ In this case, considering $e_{i} := \dfrac{\partial}{\partial p_{i}}$, we will have
 $$[ e_{i}, e_{j} ] = [ \dfrac{\partial}{\partial p_{i}} ,  \dfrac{\partial}{\partial p_{j}} ] = 0. $$ 
 Therefore, it follows that $C_{ij} ^{k} = 0.$\\
 On the other hand, for the vector bundle $\tau^{\prime} : \dfrac{P \times T_{\xi^{\prime}} O(\xi^{\prime})}{\mathfrak{G}} = M \times T_{\xi^{\prime}} O (\xi^{\prime} ) \longrightarrow M = \dfrac{P}{\mathfrak{G}}, $ if $\lbrace e^{\prime} _{i} \rbrace $ be local basis of sections $X^{\prime} \in \Gamma ( A \mathcal{G}_{\xi}),$ then
\begin{eqnarray}
[ e^{\prime} _{i}, e^{\prime} _{j} ] &=& ad^{\ast} _{[ e_{i}, e_{j} ]} \xi  \nonumber   \\
&=& ad^{\ast} _{C_{ij} ^{k} e_{k}} \xi   \nonumber  \\
&=& C_{ij} ^{k} ~ad^{\ast} _{e_{k}} \xi \nonumber  \\
&=& 0. \nonumber
\end{eqnarray}
Therefore, according to $[ e^{\prime} _{i} , e^{\prime} _{j}] = C_{ij} ^{\prime^{k}} e^{\prime} _{k}$, it follows that $C_{ij} ^{\prime^{k}} = 0 = C_{ij} ^{k},$
and for the anchor map $\rho^{\prime}: A \mathcal{G} \longrightarrow TM,$ from the equality  $\rho^{\prime} (X^{\prime} ) = \rho (X)$, it simply follows that $\rho^{\prime^{i}} _{\alpha} = \rho^{i} _{\alpha}.$\\

\textbf{3.} Consider action groupoid $\mathfrak{G} \ltimes M \rightrightarrows M,$ for smooth action of Lie group $\mathfrak{G}$ on a manifold $M.$ This groupoid is regular if and only if the action is transitive. If so, $M$ may be identified with the homogeneous space $\mathfrak{G} / \mathcal{H},$ where $\mathcal{H}$ is closed subgroup of $\mathfrak{G}.$ Under this identification, $\mathfrak{G} \ltimes (\mathfrak{G} / \mathcal{H} )$ is isomorphic to the gauge groupoid $\mathfrak{G} ( \mathfrak{G}  / \mathcal{H} , \mathcal{H} )$ (see \cite{Mackenzie} for more details) with the well-known results given in the previous two examples.

\section{Integrable Hamiltonian system on co-adjoint Lie algebroid}

In this section, we will first have a brief overview of the structure of a linear Poisson on dual of co-adjoint Lie algebroid. Then, using the above, we try to define a Hamiltonian function on dual of co-adjoint Lie algebroid associated to the co-adjoint Lie groupoid of the form $O(\xi) = M \times O(\xi^{\prime})$, where $O(\xi^{\prime}) \subset \mathfrak{g}^{\ast}$ is co-adjoint orbit of Lie group $\mathfrak{G}.$ In the last part of this section, we try to achieve an integral Hamiltonian system.

%=======================================================================================
\subsection{Linear Poisson structure and Hamiltonian function}
Let $( \tau:A \longrightarrow M, \rho, [\vert ~,~ \vert] )$ be a Lie algebroid and $\tau^{\ast} : A^{\ast} \longrightarrow M$ be its dual bundle. It is well-known that $A^{\ast}$ admits a linear Poisson structure which is indicated by $\lbrace. , . \rbrace _{A^{\ast}},$ is characterized by the following conditions:
$$ \lbrace. , . \rbrace _{A^{\ast}} : C^{\infty} (A^{\ast} )\times C^{\infty} (A^{\ast}) \longrightarrow C^{\infty} (A^{\ast} )$$
$$ \lbrace \hat{X} , \hat{Y} \rbrace _{A^{\ast}} = - [\vert \widehat{ X , Y} \vert], $$ 
$$ \lbrace l \circ \tau^{\ast}  ~,~ \hat{X} \rbrace _{A^{\ast}} = ( \rho (X) (l) ) \circ \tau^{\ast}, $$
$$\lbrace l \circ \tau^{\ast} ~,~  k \circ \tau^{\ast}  \rbrace _{A^{\ast}} = 0.$$
where $X, Y \in \Gamma (A),~ l , k \in C^{\infty} (M)$ and $\hat{X}, \hat{Y} \in C^{\infty} (A^{\ast}).$

Also linear Poisson bivector on $A^{\ast}$ is defined by
$$\Pi_{A^{\ast}} (d \varphi , d \psi ) = \lbrace \varphi , \psi \rbrace _{A^{\ast}} $$
where $\varphi , \psi \in C^{\infty} (A^{\ast}).$\\

Let $(x^{i})$ is local coordinates on open subset $U$ of $M,~ \lbrace e_{\alpha} \rbrace$ is local basis of sections for $A,$ we have that 

\begin{equation}
\label{Poisson}
\Pi_{A^{\ast}} = \rho_{\alpha} ^{i}  \dfrac{\partial}{\partial x^{i}} \wedge \dfrac{\partial}{\partial y_{\alpha}} - \dfrac{1}{2} C_{\alpha \beta} ^{\gamma} y_{\gamma} \dfrac{\partial}{\partial y_{\alpha}} \wedge \dfrac{\partial}{\partial y_{\beta}} 
\end{equation}
where $(x^{i} , y_{\alpha})$ are the corresponding local coordinates on $A^{\ast}$ and $\rho_{\alpha} ^{i}, C_{\alpha \beta} ^{\gamma}$ are the local structure functions of $A$ with respect to the coordinates $(x^{i} )$ and basis $  \lbrace e_{\alpha} \rbrace .$\\

Moreover, in local coordinates $(x^{i} , y_{\alpha})$ on $A^{\ast}$ we have that
\begin{equation*}
\lbrace x^{i} , x^{j}  \rbrace _{A^{\ast}} = 0,~~~~~~ \lbrace  y_{\alpha} , x^{j} \rbrace _{A^{\ast}} = \rho_{\alpha} ^{j},~~~~ and ~~~~ \lbrace y_{\alpha} , y_{\beta} \rbrace _{A^{\ast}} = C_{\alpha \beta} ^{\gamma} y_{\gamma}.
\end{equation*}

For Hamiltonian function $H: A^{\ast} \longrightarrow \mathbb{R},$  the Hamiltonian vector field associated to $\Pi_{A^{\ast}}$ is as follows:
\begin{equation}
\label{HVF}
 \mathcal{X}_{H} ^{\Pi_{A^{\ast} }} (F) = \lbrace F, H \rbrace _{A^{\ast} } = \Pi_{A^{\ast} } ( d F , d H )
\end{equation}
where $ F \in C^{\infty} (A^{\ast}).$
From equations (\ref{Poisson}) and (\ref{HVF}) it follows that the local expression of $ \mathcal{X}_{H} ^{\Pi_{A^{\ast} }}$ is: 
\begin{equation}
\label{LHVF}
 \mathcal{X}_{H} ^{\Pi_{A^{\ast} }} = \dfrac{\partial H}{\partial y_{\alpha}}  \rho_{\alpha} ^{i}  \dfrac{\partial}{\partial x^{i}}  - \Big( \dfrac{\partial H}{\partial x^{i}} \rho_{\alpha} ^{i} +  \dfrac{\partial H}{\partial y_{\beta}} C_{\alpha \beta} ^{\gamma} y_{\gamma} \Big) \dfrac{\partial}{\partial y_{\alpha}}.
\end{equation}
So, the Hamiltonian equations are 
\begin{equation*}
 \dfrac{d x^{i}}{d t}= \dfrac{\partial H}{\partial y_{\alpha}}  \rho_{\alpha} ^{i}, ~~~~~~~~~\dfrac{d y_{\alpha}}{d t} = - \Big( \dfrac{\partial H}{\partial x^{i}} \rho_{\alpha} ^{i} +  \dfrac{\partial H}{\partial y_{\beta}} C_{\alpha \beta} ^{\gamma} y_{\gamma} \Big).
\end{equation*}

Let $G \rightrightarrows M$ be a Lie groupoid and $\mathcal{G}:= O(\xi) \rightrightarrows M$ be its associated co-adjoint Lie groupoid. Moreover, Let $\mathfrak{G}$ be a Lie group, $\mathfrak{g}$ its Lie algebra and $O(\xi^{\prime}) \subset \mathfrak{g}^{\ast}$ is co-adjoint orbit. 
We discussed in full detail in \cite{H A 2} that for co-adjoint Lie algebroid $ (A \mathcal{G}, \rho^{\prime}, [\vert ~,~ \vert]^{\prime} ),$ its dual bundle, $A^{\ast} \mathcal{G}$ has linear Poisson structure.

For a section $X^{\prime}=ad_{X} ^{\ast} \xi$  of  $\Gamma(\tau')$ we consider the associated linear function $\hat{X^{\prime}}$ on $A^{\ast} \mathcal{G}$ as follows:
$$\hat{X^{\prime}} : A^{\ast} \mathcal{G} \longrightarrow \mathbb{R} $$
 $$\hat{X^{\prime}} (\delta) = \delta ( X^{\prime} ( \tau^{\prime^{\ast}} (\delta))) $$
 where $\delta \in A^{\ast} \mathcal{G}$ and $\tau^{\prime^{\ast}} : A^{\ast} \mathcal{G} \longrightarrow M$ is dual bundle of $\tau^{\prime} : A \mathcal{G} \longrightarrow M.$
 In other words, the above formula indicates that $\hat{X^{\prime}} = \widehat{ad^{\ast} _{X} \xi}.$
 
 Also, $A^{\ast} \mathcal{G}$ has linear Poisson structure as follows:
 $$ \lbrace. , . \rbrace _{A^{\ast} \mathcal{G}} : C^{\infty} (A^{\ast} \mathcal{G})\times C^{\infty} (A^{\ast} \mathcal{G} ) \longrightarrow C^{\infty} (A^{\ast} \mathcal{G}) $$
\begin{equation}
\label{first eq}
\lbrace \hat{X^{\prime}} , \hat{Y^{\prime}} \rbrace _{A^{\ast} \mathcal{G}}  = - [\vert \widehat{ X^{\prime} , Y^{\prime}} \vert]^{\prime} = - \widehat{ad_{[\vert X , Y \vert]} ^{\ast} \xi} 
\end{equation}
$$ \lbrace f \circ \tau^{\prime^{\ast}}  ~,~ \hat{X^{\prime}} \rbrace _{A^{\ast} \mathcal{G}} = ( \rho^{\prime} (X^{\prime}) (f) ) \circ \tau^{\prime^{\ast}} $$
$$\lbrace f \circ \tau^{\prime^{\ast}} ~,~  g \circ \tau^{\prime^{\ast}}  \rbrace _{A^{\ast} \mathcal{G}} = 0.$$
where
 $\tau^{\prime^{\ast}} : A^{\ast} \mathcal{G} \longrightarrow M,$
 $ f , g \in C^{\infty} (M)$ 
 and
  $f \circ \tau^{\prime^{\ast}} , g \circ \tau^{\prime^{\ast}} \in C^{\infty} (A^{\ast} \mathcal{G}).$\\
  For every Hamiltonian $H^{\prime} : A^{\ast} \mathcal{G} \longrightarrow \mathbb{R}$ and $ F^{\prime} \in C^{\infty} (A^{\ast} \mathcal{G}),$ the Hamiltonian vector field $\mathcal{X}_{H^{\prime}} $ on $A^{\ast}\mathcal{G} $ will be considered as 
$$\mathcal{X}_{H^{\prime}} (F^{\prime}) = \lbrace F, H^{\prime} \rbrace _{A^{\ast} \mathcal{G}} = \Pi_{A^{\ast} \mathcal{G}} ( d F^{\prime} , d H^{\prime} ).$$ 

Also there, as a specific example, we considered the trivial Lie groupoid showed that the co-adjoint Lie groupoid associated to this Lie groupoid is of the form $O(\xi) = M \times O(\xi^{\prime}) $ with co-adjoint Lie algebroid $A \mathcal{G} = M \times T_{\xi^{\prime}} O(\xi^{\prime}).$
Similar results were obtained for gauge groupoid and specific case for the action groupoid.\\

Given the first property of the linear Poisson structure, equation (\ref{first eq}), we try to define the Hamiltonian function and the first integral function. \\

So, according to what was described above, in the following lemma, we will show that if the co-adjoint Lie groupoid associated to a Lie groupoid be of the form $O(\xi) = M \times O(\xi^{\prime}), $ then the Hamiltonian function $H: M \times T_{\xi^{\prime}} ^{\ast} O(\xi^{\prime})  \longrightarrow \mathbb{R}$ is equal to Hamiltonian $h: T^{\ast} O(\xi^{\prime}) \longrightarrow \mathbb{R}. $

\begin{lemma}
\label{H h}
If $O(\xi) = M \times O(\xi^{\prime})$ be  the co-adjoint Lie groupoid associated to a Lie groupoid $G \rightrightarrows M,$ then the Hamiltonian function $H: M \times T_{\xi^{\prime}} ^{\ast} O(\xi^{\prime})  \longrightarrow \mathbb{R}$ does not depend on the coordinates of $M,$ which $O(\xi^{\prime}) \subset \mathfrak{g}^{\ast}$ is co-adjoint orbit and $\mathfrak{g}^{\ast}$ is dual of Lie algebra corresponding to Lie group $\mathfrak{G}.$
\end{lemma}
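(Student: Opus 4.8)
The plan is to push everything through the local coordinate description of the linear Poisson structure on $A^{\ast}\mathcal{G}$ from equation~(\ref{Poisson}) and of the Hamiltonian vector field from equation~(\ref{LHVF}), exploiting the very rigid form the structure functions of $A\mathcal{G}$ take in the product case $O(\xi)=M\times O(\xi^{\prime})$. First I would fix local coordinates $(x^{i})$ on $M$, take the basis of sections $e^{\prime}_{\alpha}=ad^{\ast}_{e_{\alpha}}\xi$ of $A\mathcal{G}=M\times T_{\xi^{\prime}}O(\xi^{\prime})$ built in Definition~\ref{basis af section for co-adjoint} (the ones coming, as in the worked example of the trivial groupoid above, from the Lie-algebra directions of $AG$), and let $(x^{i},y_{\alpha})$ be the induced coordinates on $A^{\ast}\mathcal{G}=M\times T^{\ast}_{\xi^{\prime}}O(\xi^{\prime})$. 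The reason for these choices is that, by the fact established above that the structure functions of $A\mathcal{G}$ coincide with those of $AG$ together with the explicit models worked out in the Examples, in this situation $C_{\alpha\beta}^{\prime^{\gamma}}=\theta_{\alpha\beta}^{\gamma}$ with $\theta_{\alpha\beta}^{\gamma}$ the (point-independent) structure constants of $\mathfrak{g}$ read off on the orbit, while $\rho_{\alpha}^{\prime^{i}}=0$, since each $e^{\prime}_{\alpha}$ is the image under $\rho^{\prime}$ of a section of $AG$ whose $TM$-component vanishes and, by the defining identity $\rho^{\prime}(X^{\prime})=\rho(X)$, such a section is sent to zero.

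Second, I would substitute $\rho_{\alpha}^{\prime^{i}}=0$ into (\ref{Poisson}), obtaining
$$\Pi_{A^{\ast}\mathcal{G}}=-\dfrac{1}{2}\,\theta_{\alpha\beta}^{\gamma}\,y_{\gamma}\,\dfrac{\partial}{\partial y_{\alpha}}\wedge\dfrac{\partial}{\partial y_{\beta}},$$
so that $\lbrace x^{i},x^{j}\rbrace=\lbrace x^{i},y_{\alpha}\rbrace=0$ and $\lbrace y_{\alpha},y_{\beta}\rbrace=\theta_{\alpha\beta}^{\gamma}y_{\gamma}$: the $x^{i}$ are Casimir functions and $\Pi_{A^{\ast}\mathcal{G}}$ is exactly the Lie--Poisson bivector of $\mathfrak{g}^{\ast}$ carried on the $T^{\ast}_{\xi^{\prime}}O(\xi^{\prime})$ factor, with $M$ entering only as a passive family of parameters. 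Correspondingly (\ref{LHVF}) collapses to $\mathcal{X}_{H}=-\big(\frac{\partial H}{\partial y_{\beta}}\,\theta_{\alpha\beta}^{\gamma}\,y_{\gamma}\big)\frac{\partial}{\partial y_{\alpha}}$, which carries no $\partial/\partial x^{i}$ component, so the flow never moves the coordinates of $M$.

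Third --- and this is the step that actually carries the statement --- I would impose that the system be a genuine Hamiltonian system on $T^{\ast}_{\xi^{\prime}}O(\xi^{\prime})$ (the whole purpose of the construction, so as to line it up with the integrable systems of \cite{Bolsinov, HO, Sokolov}), that is, that $\mathcal{X}_{H}$ be projectable along the trivial fibration $M\times T^{\ast}_{\xi^{\prime}}O(\xi^{\prime})\longrightarrow T^{\ast}_{\xi^{\prime}}O(\xi^{\prime})$. Since the coefficients $\theta_{\alpha\beta}^{\gamma}y_{\gamma}$ do not involve $x$, projectability forces $\partial^{2}H/\partial x^{i}\partial y_{\beta}=0$ for all $i,\beta$, hence $H(x,y)=f(x)+h(y)$; and because $f$ depends only on the Casimirs $x^{i}$ it Poisson-commutes with everything and contributes nothing to $\mathcal{X}_{H}$, so it may be discarded without affecting the dynamics. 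One is left with $H=h$, $h:T^{\ast}_{\xi^{\prime}}O(\xi^{\prime})\to\mathbb{R}$ (equivalently $h:T^{\ast}O(\xi^{\prime})\to\mathbb{R}$ if one prefers to spread it over the whole orbit), independent of the coordinates of $M$, which is the claim. The main obstacle is exactly this last point: it is not a purely formal consequence of the Poisson geometry that $H$ must be $x$-independent, and one has to decide what precise meaning to give to ``the Hamiltonian system on $O(\xi)$ reduces to one on $O(\xi^{\prime})$'' --- I would phrase it via projectability of $\mathcal{X}_{H}$, or, what amounts to the same, via the remark that only the class of $H$ modulo Casimirs is dynamically meaningful and then selecting the $x$-independent representative. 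A secondary point requiring care is establishing $\rho_{\alpha}^{\prime^{i}}=0$ beyond the explicitly computed examples, which I would settle by the anchor identity $\rho^{\prime}(X^{\prime})=\rho(X)$ applied to sections of $AG$ with vanishing tangent part along $M$.
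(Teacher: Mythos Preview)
Your argument is sound and arguably more transparent than the paper's, though the route differs. The paper does not isolate $\rho_{\alpha}^{\prime\,i}=0$ nor invoke the Casimir property of the $x^{i}$ directly. Instead it writes down the Hamiltonian vector field and Hamiltonian equations for $H$ on $A^{\ast}\mathcal{G}$ --- recording in particular an equation $\dfrac{dx^{i}}{dt}=\dfrac{\partial H}{\partial x^{i}}$ --- and separately writes the Hamiltonian equations for a function $h:T^{\ast}O(\xi^{\prime})\to\mathbb{R}$ under the Kirillov--Kostant bracket; it then invokes the identity $\lbrace F,H\rbrace_{A^{\ast}\mathcal{G}}=\lbrace f,h\rbrace_{K.K}$ established in \cite{H A 2} to conclude $\mathcal{X}_{H}(F)=\mathcal{X}_{h}(f)$, and reads $\dfrac{\partial H}{\partial x^{i}}=0$ off the comparison of the two systems of equations. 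Your approach, by contrast, is self-contained (no appeal to the external bracket identity from \cite{H A 2}) and is explicit about the point the paper leaves implicit: that $x$-independence of $H$ really holds only up to an additive Casimir $f(x)$, and one is selecting the projectable representative. The paper effectively builds this choice into the presupposed correspondence $H\leftrightarrow h$ when it cites the bracket identity. Both arguments land on the same local Lie--Poisson picture; yours exposes the logical structure and the interpretive step more clearly, while the paper's trades that for brevity by leaning on \cite{H A 2}.
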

\begin{proof}
Let $(\mathfrak{y}_{\alpha})$ be local coordinates on $T^{\ast} O(\xi^{\prime}) $ and $(x^{i})$ be local coordinates on $M.$ 
So, similar to what was stated in equation (\ref{LHVF}), the Hamiltonian vector field $\mathcal{X}_{H} \in \mathfrak{X} ( M \times T^{\ast} _{\xi^{\prime}} O(\xi^{\prime} ))$ for Hamiltonian $H: M \times T_{\xi^{\prime}} ^{\ast} O(\xi^{\prime})  \longrightarrow \mathbb{R}$ is as follows:
\begin{equation*}
\mathcal{X}_{H} ^{\Pi_{A^{\ast} \mathcal{G}_{\xi}}} =  \dfrac{\partial H}{\partial x^{i}}   \dfrac{\partial }{\partial x^{i}}  -  C_{\alpha \beta} ^{\gamma} \mathfrak{y}_{\gamma}  \dfrac{\partial H}{\partial \mathfrak{y_{\beta}}}  \dfrac{\partial} {\partial  \mathfrak{y}_{\alpha}}.
\end{equation*}

 Thus, the corresponding Hamiltonian equations are as follows:
 \begin{equation}
 \label{HE1}
\dfrac{d x^{i}}{dt} = \dfrac{\partial H}{\partial x^{i}}, ~~~~~~~~~~~~~\dfrac{d \mathfrak{y}_{\alpha}}{dt} = - C_{\alpha \beta} ^{\gamma} \mathfrak{y}_{\gamma} \dfrac{\partial H}{\partial \mathfrak{y}_{\beta}}.
 \end{equation}
 Moreover, if $(x^{i}) $ are local coordinates on $M, \lbrace e^{\prime} _{\alpha} \rbrace $ be local basis of $\Gamma (A \mathcal{G} )$ and $(x^{i}, \mathfrak{y}_{\alpha})$ are corresponding coordinates on $ A^{\ast} \mathcal{G} = M \times T^{\ast} _{\xi^{\prime}} O(\xi^{\prime} )$ then the local expression of $\Pi_{A^{\ast} \mathcal{G}} $ will be as follows:
 \begin{equation*}
 \label{C P}
  \Pi_{A^{\ast} \mathcal{G}_{\xi}}  = \dfrac{1}{2} \dfrac{\partial}{\partial x^{i}} \wedge \dfrac{\partial}{\partial x^{i}} - \dfrac{1}{2} C_{\alpha \beta} ^{\gamma} \mathfrak{y}_{\gamma} \dfrac{\partial}{\partial \mathfrak{y}_{\alpha}} \wedge \dfrac{\partial}{\partial \mathfrak{y}_{\beta}}. 
 \end{equation*}
On the other hand, let $h: T^{\ast} O(\xi^{\prime}) \longrightarrow \mathbb{R}$ be Hamiltonian function on $T^{\ast} O(\xi^{\prime}).$ In local coordinates $ \mathfrak{y}_{\alpha} $ for $ T^{\ast} O(\xi^{\prime}), $ the Poisson 2-vector $ \Pi $ is
\begin{equation*}
\Pi = - \dfrac{1}{2}  C_{\alpha \beta} ^{\gamma} \mathfrak{y}_{\gamma} \dfrac{\partial}{\partial \mathfrak{y}_{\alpha}} \wedge \dfrac{\partial}{\partial \mathfrak{y}_{\beta}},
\end{equation*}
and the Hamiltonian vector field $ \mathcal{X}_{h} $ associated to Hamiltonian function $ h $ is
\begin{equation*}
\mathcal{X}_{h} = - C_{\alpha \beta} ^{\gamma} \mathfrak{y}_{\gamma}  \dfrac{\partial H}{\partial \mathfrak{y_{\beta}}}  \dfrac{\partial} {\partial  \mathfrak{y}_{\alpha}}.
\end{equation*}
 So, its Hamiltonian equations are as follows:
\begin{equation}
\label{HE2}
\dfrac{d \mathfrak{y}_{\alpha}}{dt} = - \dfrac{\partial h}{\partial \mathfrak{y}_{\beta}} C_{\alpha \beta} ^{\gamma} \mathfrak{y}_{\gamma}.
\end{equation}
Suppose $\lbrace~,~ \rbrace_{K.K} $  is the symbol of Kirillov-Kostant bracket on $C^{\infty} ( T^{\ast} _{\xi^{\prime}} O(\xi^{\prime} )).$ It is clear that
\begin{equation}
\label{hvf}
 \mathcal{X}_{h} (f ) = \Pi (d f , d h ) = \lbrace f, h \rbrace_{K.K}.
\end{equation}
 Furthermore, as we proved in \cite{H A 2},  Hamiltonian $H: M \times T_{\xi^{\prime}} ^{\ast} O(\xi^{\prime})  \longrightarrow \mathbb{R},$ 
 $$\lbrace F, H \rbrace _{A^{\ast} \mathcal{G} } = \lbrace f, h \rbrace_{K.K}$$
 where $ F = (m , f ) \in C^{\infty} (M \times T^{\ast} _{\xi^{\prime}} O(\xi^{\prime} )) $ and $ f \in C^{\infty} (T^{\ast} _{\xi^{\prime}} O(\xi^{\prime} )).$
 therefore, according to equations (\ref{HVF}) and (\ref{hvf}), we conclude that
  \begin{equation*}
 \label{equality}
  \mathcal{X}_{H} ^{\Pi_{A^{\ast} \mathcal{G}}} (F) =\mathcal{X}_{h} (f )
 \end{equation*}
where $ \mathcal{X}_{H} ^{\Pi_{A^{\ast} \mathcal{G}}}$ is Hamiltonian vector field associated to Hamiltonian $H: M \times T_{\xi^{\prime}} ^{\ast} O(\xi^{\prime})  \longrightarrow \mathbb{R}.$ 
From this equality and comparison of equations (\ref{HE1}) and (\ref{HE2}), it follows that $\dfrac{\partial H}{\partial x^{i}} = 0,$ i.e. $H: M \times T_{\xi^{\prime}} ^{\ast} O(\xi^{\prime})  \longrightarrow \mathbb{R}$ does not depend on the coordinates of $M.$
\end{proof}
\\

Therefore, according to the lemma mentioned above (lemma \ref{H h}), we consider the Hamiltonian function $H: M \times T_{\xi^{\prime}} ^{\ast} O(\xi^{\prime})  \longrightarrow \mathbb{R}$ such that $H = (p, h),$ i.e. for $\delta = (p,\lambda) \in M \times T_{\xi^{\prime}} ^{\ast} O(\xi^{\prime}),$ we have that $H (p, \lambda ) = h(\lambda),$
where $h: T^{\ast} O(\xi^{\prime}) \longrightarrow \mathbb{R}$ is Hamiltonian function on $T^{\ast} O(\xi^{\prime}).$\\
 In the following, we try to define the first integral corresponding to Hamiltonian vector field associated to the Hamiltonian $H.$

\begin{definition}
\label{first integral}
Let  $H: M \times T_{\xi^{\prime}} ^{\ast} O(\xi^{\prime})  \longrightarrow \mathbb{R}$ is the Hamiltonian function and $ \mathcal{X}_{H} ^{\Pi_{A^{\ast} \mathcal{G}}}$ is Hamiltonian vector field associated to Hamiltonian $H.$ A function $F \in C^{\infty} (M \times T^{\ast} _{\xi^{\prime}} O(\xi^{\prime} ))$ is called first integral corresponding to Hamiltonian vector field $\mathcal{X}_{H} ^{\Pi_{A^{\ast} \mathcal{G}}}$ whenever
\begin{equation*}
\mathcal{X}_{H} ^{\Pi_{A^{\ast} \mathcal{G}}} (F) = \lbrace F, H \rbrace_{A^{\ast} \mathcal{G}} =0.
\end{equation*}
\end{definition}

\begin{remark}
\label{L F}
As we stated in \cite{H A 2}, every function $F \in C^{\infty} (M \times T^{\ast} _{\xi^{\prime}} O(\xi^{\prime} ))$ is defined as  $F=(p,f)$ which $f \in C^{\infty} ( T^{\ast} O(\xi^{\prime} )).$
\end{remark}
In the following statement, we want to show the relationship between the first integral of Hamiltonian vector field associated to Hamiltonian $H: M \times T_{\xi^{\prime}} ^{\ast} O(\xi^{\prime})  \longrightarrow \mathbb{R}$ and the first integral of Hamiltonian vector field corresponding to Hamiltonian $h: T^{\ast} O(\xi^{\prime}) \longrightarrow \mathbb{R}.$ 
\begin{proposition}
\label{first int pro}
Suppose that $H : M \times T_{\xi^{\prime}} ^{\ast} O(\xi^{\prime})  \longrightarrow \mathbb{R}, ~ H (p,\lambda) = h(\lambda)$ is the Hamiltonian function on $ A^{\ast} \mathcal{G},$ $~ \Pi_{A^{\ast} \mathcal{G}} $ be the Poisson structure on $A^{\ast} \mathcal{G}$ and $ \mathcal{X}_{H} ^{\Pi_{A^{\ast} \mathcal{G}}}$ is Hamiltonian vector field associated to Hamiltonian $H.$ Also, assume that  $\mathcal{X}_{h} ^{\Pi}$ be Hamiltonian vector field corresponding Hamiltonian $h: T^{\ast} O(\xi^{\prime}) \longrightarrow \mathbb{R}.$ Then, first integrals of Hamiltonian vector field $\mathcal{X}_{H} ^{\Pi_{A^{\ast} \mathcal{G}}}$ correspond to the first integrals of Hamiltonian vector field $\mathcal{X}_{h} ^{\Pi},$ i.e. $F  = (p,f) \in C^{\infty} (M \times T^{\ast} _{\xi^{\prime}} O(\xi^{\prime} ))$ is first integral of Hamiltonian vector field $\mathcal{X}_{H} ^{\Pi_{A^{\ast} \mathcal{G}}}$ if and only  $f \in C^{\infty} ( T^{\ast} O(\xi^{\prime} ))$ is first integral of Hamiltonian vector field $\mathcal{X}_{h} ^{\Pi}.$
 
\end{proposition}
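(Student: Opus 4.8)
The plan is to reduce the statement to the bracket identity already available for the co-adjoint Lie algebroid $A^{\ast}\mathcal{G}$. By Remark \ref{L F} we write an arbitrary $F \in C^{\infty}(M \times T^{\ast}_{\xi^{\prime}}O(\xi^{\prime}))$ in the product form $F = (p,f)$ with $f \in C^{\infty}(T^{\ast}O(\xi^{\prime}))$, and by Lemma \ref{H h} the Hamiltonian $H$ may likewise be written $H = (p,h)$ with $h \in C^{\infty}(T^{\ast}O(\xi^{\prime}))$, i.e. $H$ does not depend on the $M$-coordinates. The essential input, recalled in the proof of Lemma \ref{H h} and proved in \cite{H A 2}, is the identity
$$\{F, H\}_{A^{\ast}\mathcal{G}} = \{f,h\}_{K.K},$$
relating the linear Poisson bracket on $A^{\ast}\mathcal{G} = M \times T^{\ast}_{\xi^{\prime}}O(\xi^{\prime})$ to the Kirillov--Kostant bracket on $C^{\infty}(T^{\ast}_{\xi^{\prime}}O(\xi^{\prime}))$; I would quote it verbatim rather than re-derive it.

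First I would record the immediate consequence $\mathcal{X}_{H}^{\Pi_{A^{\ast}\mathcal{G}}}(F) = \mathcal{X}_{h}^{\Pi}(f)$, which follows by combining the defining formulas of the two Hamiltonian vector fields (equations (\ref{HVF}) and (\ref{hvf})) with the identity above; alternatively it can be read off the local coordinate expressions in the proof of Lemma \ref{H h}, since $H$ being independent of the $x^{i}$ forces $\mathcal{X}_{H}^{\Pi_{A^{\ast}\mathcal{G}}}$ to have no $\partial/\partial x^{i}$-component and to act exactly as $\mathcal{X}_{h}^{\Pi}$ in the $\mathfrak{y}_{\alpha}$-directions. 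With this in hand the proposition is just an unwinding of Definition \ref{first integral}: $F=(p,f)$ is a first integral of $\mathcal{X}_{H}^{\Pi_{A^{\ast}\mathcal{G}}}$ iff $\{F,H\}_{A^{\ast}\mathcal{G}}=0$ iff $\{f,h\}_{K.K}=0$ iff $f$ is a first integral of $\mathcal{X}_{h}^{\Pi}$. Since the two conditions are equivalent at once, the same chain delivers the claimed bijective correspondence $F \leftrightarrow f$.

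The one point that needs genuine care — and where I expect the (still routine) bulk of the argument to sit — is the justification that no first integrals are lost by passing to the product description $F=(p,f)$, i.e. that a first integral which a priori depends nontrivially on the $M$-coordinates is still captured. This is handled exactly as in Lemma \ref{H h}: because $\mathcal{X}_{H}^{\Pi_{A^{\ast}\mathcal{G}}}$ annihilates each $\partial/\partial x^{i}$, the equation $\mathcal{X}_{H}^{\Pi_{A^{\ast}\mathcal{G}}}(F)=0$ only constrains the dependence of $F$ on the fibre coordinates $\mathfrak{y}_{\alpha}$, leaving the base point $p$ as a free parameter — which is precisely the content of Remark \ref{L F}. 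To be thorough I would also note that the restriction of the linear Poisson bivector $\Pi_{A^{\ast}\mathcal{G}}$ to functions pulled back from $T^{\ast}_{\xi^{\prime}}O(\xi^{\prime})$ coincides with the Kirillov--Kostant bivector, which is what underlies the identity $\{F,H\}_{A^{\ast}\mathcal{G}}=\{f,h\}_{K.K}$; this was established in \cite{H A 2} and may simply be cited.
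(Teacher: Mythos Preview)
Your proposal is correct and follows essentially the same approach as the paper: both arguments rest on the identity $\{F,H\}_{A^{\ast}\mathcal{G}} = \{f,h\}_{K.K}$ from \cite{H A 2} together with Remark \ref{L F} and Definition \ref{first integral}. The only cosmetic difference is that the paper proves the two implications separately (doing the converse by contradiction), whereas you run the chain of equivalences directly; your additional paragraph about functions depending nontrivially on the $M$-coordinates goes slightly beyond what the proposition literally asserts, but is consistent with the paper's use of Remark \ref{L F}.
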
 
\begin{proof}
Let $f \in C^{\infty} ( T^{\ast} O(\xi^{\prime} ))$ be first integral of Hamiltonian vector field $\mathcal{X}_{h} ^{\Pi}.$ Thus $\mathcal{X}_{h} ^{\Pi} (f) = 0.$ \\
Consider $F = (p,f) \in C^{\infty} (M \times T^{\ast} _{\xi^{\prime}} O(\xi^{\prime} )).$ So we have
\begin{eqnarray}
\mathcal{X}_{H} ^{\Pi_{A^{\ast} \mathcal{G}}} (F) &=& \lbrace F, H \rbrace_{A^{\ast} \mathcal{G}}  \nonumber  \\
&=& \lbrace f,h \rbrace_{K.K} \nonumber  \\
&=& \mathcal{X}_{h} ^{\Pi} (f)  \nonumber \\
&=& 0 \nonumber
\end{eqnarray}
So it turns out that $F = (p,f)$ is first integral of Hamiltonian vector field $\mathcal{X}_{H} ^{\Pi_{A^{\ast} \mathcal{G}_{\xi}}}.$\\

 On the other hand, consider $F \in C^{\infty} (M \times T^{\ast} _{\xi^{\prime}} O(\xi^{\prime} )).$ According to remark (\ref{L F}), we have that $F = (p,g),$ where $g \in C^{\infty} ( T^{\ast} O(\xi^{\prime} )).$  Suppose that $g$ is not the first integral of Hamiltonian vector field $\mathcal{X}_{h} ^{\Pi},$  i.e. $\lbrace g,h \rbrace_{K.K} \neq 0.$\\
 Let $F = (p,g)$ be first integral of Hamiltonian vector field $\mathcal{X}_{H} ^{\Pi_{A^{\ast} \mathcal{G}}}.$ Thus, according to definition (\ref{first integral}), we have that $\mathcal{X}_{H} ^{\Pi_{A^{\ast} \mathcal{G}}} (F) = 0,$ so $ \lbrace F, H \rbrace_{A^{\ast} \mathcal{G}} = 0.$ Since $ \lbrace F, H \rbrace_{A^{\ast} \mathcal{G}} =\lbrace g,h \rbrace_{K.K},$ it follows that $\lbrace g,h \rbrace_{K.K} = 0,$ which contradicts the assumption that $g$ is not the first integral of $\mathcal{X}_{h} ^{\Pi}.$ \\
So, $F  = (p,f) \in C^{\infty} (M \times T^{\ast} _{\xi^{\prime}} O(\xi^{\prime} ))$ is first integral of Hamiltonian vector field $\mathcal{X}_{H} ^{\Pi_{A^{\ast} \mathcal{G}}}$ if and only  $f \in C^{\infty} ( T^{\ast} O(\xi^{\prime} ))$ is first integral of Hamiltonian vector field $\mathcal{X}_{h} ^{\Pi}.$
\end{proof}
\begin{corollary}
The Hamiltonian vector field $ \mathcal{X}_{H} ^{\Pi_{A^{\ast} \mathcal{G}}}$ on $O(\xi)$ is integrable if only if the Hamiltonian vector field $\mathcal{X}_{h} ^{\Pi}$ on $O(\xi')$ is integrable.
\end{corollary}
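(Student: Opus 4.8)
The plan is to deduce the corollary directly from Proposition~\ref{first int pro}, once the notion of integrability is pinned down and once one checks that the correspondence $F=(p,f)\leftrightarrow f$ transports the full integrability data and not merely the property of being a single first integral. I would adopt the notion used in \cite{Bolsinov, HO, Sokolov}: a Hamiltonian vector field on a Poisson manifold is integrable when it admits a maximal family of functionally independent first integrals that are pairwise in involution (so that on a generic symplectic leaf one recovers Liouville integrability). Thus integrability of $\mathcal{X}_{h}^{\Pi}$ refers to the Kirillov--Kostant bracket $\lbrace~,~\rbrace_{K.K}$ on $T^{\ast}_{\xi^{\prime}}O(\xi^{\prime})$, and integrability of $\mathcal{X}_{H}^{\Pi_{A^{\ast}\mathcal{G}}}$ refers to $\lbrace~,~\rbrace_{A^{\ast}\mathcal{G}}$ on $A^{\ast}\mathcal{G}=M\times T^{\ast}_{\xi^{\prime}}O(\xi^{\prime})$.

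First I would record, from the local expression for $\Pi_{A^{\ast}\mathcal{G}}$ obtained in the proof of Lemma~\ref{H h}, that the Poisson bivector has no component mixing the $M$-coordinates $x^{1},\dots,x^{m}$ with the fibre coordinates $\mathfrak{y}_{\alpha}$, and that along the $T^{\ast}_{\xi^{\prime}}O(\xi^{\prime})$-directions it coincides with the Kirillov--Kostant structure; in particular the pulled-back coordinates $x^{i}$ are Casimirs of $\lbrace~,~\rbrace_{A^{\ast}\mathcal{G}}$. Next I would combine Remark~\ref{L F}, which says every $F\in C^{\infty}(M\times T^{\ast}_{\xi^{\prime}}O(\xi^{\prime}))$ has the form $F=(p,f)$, with Proposition~\ref{first int pro}: the assignment $F=(p,f)\mapsto f$ is a bijection from the first integrals of $\mathcal{X}_{H}^{\Pi_{A^{\ast}\mathcal{G}}}$ onto those of $\mathcal{X}_{h}^{\Pi}$. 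Using the identity $\lbrace F_{1},F_{2}\rbrace_{A^{\ast}\mathcal{G}}=\lbrace f_{1},f_{2}\rbrace_{K.K}$ that underlies the proof of that proposition, this map sends involutive families to involutive families; and since $(p,\lambda)\mapsto\lambda$ is a submersion, the differentials $dF_{1},\dots,dF_{k}$ are independent at a point if and only if $df_{1},\dots,df_{k}$ are independent at its image, so functional independence passes back and forth as well.

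With these ingredients the corollary reduces to a counting step in both directions. From a maximal independent involutive family $f_{1},\dots,f_{N}$ for $\mathcal{X}_{h}^{\Pi}$ on $O(\xi^{\prime})$ I would build $(p,f_{1}),\dots,(p,f_{N}),x^{1},\dots,x^{m}$ on $O(\xi)=M\times O(\xi^{\prime})$: it is involutive (the $x^{i}$ are Casimirs, hence they Poisson-commute with everything and with one another), functionally independent by the submersion remark, and of the size dictated by $\dim O(\xi)$ together with $\mathrm{rank}\,\Pi_{A^{\ast}\mathcal{G}}=\mathrm{rank}\,\Pi$, the $m$ new Casimirs absorbing exactly the $m$ extra dimensions of the $M$-factor. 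Conversely, discarding from a maximal family for $\mathcal{X}_{H}^{\Pi_{A^{\ast}\mathcal{G}}}$ the functions depending on $M$ and applying $F=(p,f)\mapsto f$ to the remainder produces a maximal family for $\mathcal{X}_{h}^{\Pi}$, Proposition~\ref{first int pro} keeping each survivor a first integral and the rank comparison ensuring maximality. This yields the asserted equivalence.

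The step I expect to be the genuine obstacle is this last rank/dimension bookkeeping: one must verify that the $M$-factor contributes nothing to $\mathrm{rank}\,\Pi_{A^{\ast}\mathcal{G}}$ and that the coordinates $x^{i}$ furnish precisely the additional Casimirs needed, so that ``maximal'' on $O(\xi^{\prime})$ becomes equivalent to ``maximal'' on $O(\xi)$. The remaining ingredients---the bijection of first integrals, the preservation of Poisson brackets, and the preservation of functional independence---are already contained in Proposition~\ref{first int pro} and Remark~\ref{L F} and need only to be assembled.
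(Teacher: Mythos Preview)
Your approach is the same as the paper's in spirit---deduce the corollary from Proposition~\ref{first int pro}---but the paper provides no proof at all, simply stating the corollary as an immediate consequence. Your argument is therefore considerably more thorough: you explicitly verify that the correspondence $F=(p,f)\leftrightarrow f$ preserves not only the first-integral property but also pairwise involutivity (via $\lbrace F_1,F_2\rbrace_{A^{\ast}\mathcal{G}}=\lbrace f_1,f_2\rbrace_{K.K}$), functional independence, and maximality (via the rank/dimension bookkeeping with the $x^i$ as Casimirs). These are exactly the details one would need to turn the paper's bare assertion into an honest proof, and the paper omits all of them.
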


\section{Symplectic structure on co-adjoint Lie groupoid}
In this section, we try to examine the symplectic structure on co-adjoint Lie groupoids. In fact, by defining a symplectic structure for the co-adjoint Lie groupoids, similar to what is said about the co-adjoint orbits of Lie groups, we intend to examine the symplectic groupoid of co-adjoint Lie groupoids.\\

Let $G \rightrightarrows M$ be a Lie groupoid with the structural functions $\alpha$, $\beta$, $1$, $\iota$ and $m.$
\begin{definition}
\label{sym groupoid}
The symplectic groupoid is a Lie groupoid $G \rightrightarrows M$ equipped with symplectic form $\omega$ on $G$ such that
$$m^{\ast} \omega = pr_{1} ^{\ast} \omega + pr_{2} ^{\ast} \omega$$
where for $i=1,2,$ $pr_{i} : G_{2} \longrightarrow G$ are the projections over the first and second factor.
\end{definition}

\begin{example}
\label{sym ex}
Every Lie groupoid $G \rightrightarrows M$ with the associated Lie algebroid $AG$ induces a Lie groupoid $T^{\ast} G  \rightrightarrows A^{\ast} G.$ The cotangent bundle $T^{\ast} G$ with the canonical symplectic form $\omega_{G}$ is a symplectic Lie groupoid over $A^{\ast} G.$ (See \cite{Mackenzie} for more details.)

\end{example}

Consider the co-adjoint Lie groupoid $\mathcal{G}:= O(\xi) \rightrightarrows M$ corresponding to the Lie groupoid $G \rightrightarrows M.$
Therefore, as stated in example \ref{sym ex}, it can be concluded that for the co-adjoint Lie groupoid $\mathcal{G} \rightrightarrows M$,
$T^{\ast} \mathcal{G} \rightrightarrows A^{\ast} \mathcal{G}$
 is a symplectic Lie groupoid. 

In the following, by presenting a proposition, we show that the co-adjoint Lie groupoid inherits the property of being symplectic groupoid from the symplectic groupoid $G \rightrightarrows M.$
 
\begin{proposition}
Co-adjoint Lie groupoid of a symplectic groupoid is a symplectic groupoid. 
\end{proposition}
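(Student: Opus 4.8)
The plan is to transport the multiplicative symplectic form of $G$ down to $\mathcal{G}=O(\xi)$ along the natural projection, and then check that multiplicativity is preserved. Write $\pi:G\to\mathcal{G}$ for $\pi(g)=Ad^{\ast}_{g}\xi$. From the formulas for $\alpha',\beta',m',1',\iota'$ this $\pi$ is a morphism of Lie groupoids over $\mathrm{id}_{M}$, it is onto, and — because $G$ is regular and $G_{\xi}$ is a normal Lie subgroupoid — it is a surjective submersion realizing $\mathcal{G}$ as $G/G_{\xi}$. The first step is to show that the symplectic form $\omega$ on $G$ descends to a $2$-form $\omega'$ on $\mathcal{G}$, i.e. that there is $\omega'$ with $\pi^{\ast}\omega'=\omega$; concretely one checks $\iota_{v}\omega=0$ and $\mathcal{L}_{v}\omega=0$ for every vector field $v$ tangent to the $\pi$-fibres (the $G_{\xi}$-directions). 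Once $\omega'$ exists it is automatically closed, since $\pi^{\ast}d\omega'=d\omega=0$ and $\pi$ is a submersion.

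The second step is multiplicativity. Let $pr_{i}:G_{2}\to G$ and $pr_{i}':(\mathcal{G})_{2}\to\mathcal{G}$ be the projections, and note that $\pi\times\pi:G_{2}\to(\mathcal{G})_{2}$ is a surjective submersion with $\pi\circ m=m'\circ(\pi\times\pi)$ and $\pi\circ pr_{i}=pr_{i}'\circ(\pi\times\pi)$ — the first of these is precisely the content of the lemma computing $Tm'(\eta_{1},\eta_{2})$ in equation (\ref{Tm}). Pulling the desired identity back along $\pi\times\pi$ gives
\[
(\pi\times\pi)^{\ast}\big(m^{\prime\ast}\omega'-pr_{1}^{\prime\ast}\omega'-pr_{2}^{\prime\ast}\omega'\big)=m^{\ast}\omega-pr_{1}^{\ast}\omega-pr_{2}^{\ast}\omega=0,
\]
and since $\pi\times\pi$ is a surjective submersion the bracketed $2$-form must vanish on $(\mathcal{G})_{2}$; this is exactly Definition \ref{sym groupoid}, so $(\mathcal{G},\omega')$ is a symplectic groupoid provided $\omega'$ is symplectic.

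That proviso — non-degeneracy of $\omega'$ — together with the well-definedness of the descent in the first step, is where I expect the real work to lie, and it is the main obstacle. Since $\omega$ is non-degenerate on $G$, it can descend to a non-degenerate form on the smaller manifold $\mathcal{G}=G/G_{\xi}$ only if the fibre directions of $\pi$ are precisely the directions one must quotient by — i.e. if $\ker T\pi$ is, at each point, its own $\omega$-orthogonal complement — and if $\omega$ is invariant along them. This is the step that genuinely uses the hypothesis that $G$ is a symplectic groupoid (so that $AG\cong T^{\ast}M$ and $G_{\xi}$ is governed by the transverse Poisson geometry of $M$), and it dovetails with the picture of $O(\xi)$ as a symplectic leaf of the linear Poisson structure on the dual $A^{\ast}I_{G}$ of the isotropy Lie algebroid, whose leaves carry a Kirillov–Kostant-type symplectic form; reconciling the descended form with this leafwise form is the technical heart of the argument. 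Everything else — smoothness of $\alpha',\beta',m',1',\iota'$, closedness of $\omega'$, and the multiplicativity bookkeeping above — is routine once this step is in place.
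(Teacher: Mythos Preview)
Your approach is essentially the paper's own: the paper simply \emph{defines} $\omega'(ad^{\ast}_{X}\xi,\,ad^{\ast}_{Y}\xi):=\omega(X,Y)$ --- which is exactly your descent condition $\pi^{\ast}\omega'=\omega$ --- and then checks multiplicativity by the same formal computation you outline, invoking equation~(\ref{Tm}). The paper does not address well-definedness, closedness, or non-degeneracy of $\omega'$ at all; the ``real work'' you flag is simply absent from the published argument.

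That said, the obstacle you identify is more serious than you allow, and cannot be overcome along these lines. If $\omega$ is non-degenerate on $G$ and $\pi^{\ast}\omega'=\omega$, then for any $v\in\ker T_{g}\pi$ and any $w\in T_{g}G$ one has $\omega(v,w)=\omega'(0,\,T_{g}\pi(w))=0$, forcing $v=0$ by non-degeneracy of $\omega$. In other words, a symplectic form descends along a submersion only when that submersion is already a local diffeomorphism; your stated criterion ``$\ker T\pi$ is its own $\omega$-orthogonal'' is, for non-degenerate $\omega$, equivalent to $\ker T\pi=0$. So unless $G_{\xi}$ is discrete (making $\pi$ a covering and the statement near-tautological), neither the paper's formula nor your descent produces a well-defined $2$-form on $\mathcal{G}$, let alone a symplectic one. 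You have located the gap precisely; it is genuine, and the paper does not fill it either --- a correct argument would have to replace the naive descent $\pi^{\ast}\omega'=\omega$ by an honest reduction procedure, and the proposition as stated may simply fail when $\dim\mathcal{G}<\dim G$.
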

\begin{proof}
Let $ (G,\omega) \rightrightarrows M$ be a symplectic groupoid. So, according to definition \ref{sym groupoid}, for every $X, Y \in T_{g} G$ we have,
$$ \omega \big( Tm (X,Y) \big) = \omega (X) + \omega (Y).$$
Consider the co-adjoint Lie groupoid $\mathcal{G}:= O(\xi) \rightrightarrows M$ corresponding to the Lie groupoid $G \rightrightarrows M.$ As mentioned before 
$$O(\xi) = \lbrace Ad_{g} ^{\ast} \xi ~ \vert ~ g \in G \rbrace,$$
and the tangent space to $O(\xi)$ is
$$TO(\xi) = \lbrace ad_{X} ^{\ast} \xi ~ \vert ~ X \in T_{g} G \rbrace.$$

Select two arbitrary tangent vectors 
$\eta_{1} = ad_{X} ^{\ast} \xi $ ,$\eta_{2} = ad_{Y} ^{\ast} \xi, $ and define the symplectic structure $\omega^{\prime}$ on $O(\xi)$ as follows:
$$ \omega^{\prime} ( \eta_{1} , \eta_{2} )=  \omega^{\prime} ( ad_{X} ^{\ast} \xi , ad_{Y} ^{\ast} \xi ) = \omega (X,Y).$$

Now, to prove that co-adjoint Lie groupoid $\mathcal{G}:= O(\xi) \rightrightarrows M$ is symplectic Lie groupoid, we examine relation
$$m^{\prime^{\ast}} \omega^{\prime} = pr_{1} ^{\ast} \omega^{\prime} + pr_{2} ^{\ast} \omega^{\prime}. $$

According to equation (\ref{Tm}), we have
\begin{eqnarray}
m^{\prime^{\ast}} \omega^{\prime} (\eta_{1} , \eta_{2} ) &=&  \omega^{\prime}  \big(T m^{\prime} (\eta_{1} , \eta_{2} ) \big)  \nonumber   \\
&=& \omega^{\prime} ( ad^{\ast} _{Tm (X,Y)} \xi )  \nonumber  \\
&=& \omega \big( Tm (X,Y) \big).  \nonumber
\end{eqnarray}
Also,
\begin{eqnarray}
pr_{1} ^{\ast} \omega^{\prime} (\eta_{1} , \eta_{2} ) + pr_{2} ^{\ast} \omega^{\prime} (\eta_{1} , \eta_{2} ) &=& \omega^{\prime} \big( Tpr_{1} (\eta_{1} , \eta_{2} ) \big) +  \omega^{\prime} \big( Tpr_{2} (\eta_{1} , \eta_{2} ) \big)   \nonumber  \\
&=& \omega^{\prime} ( ad_{X} ^{\ast} \xi) + \omega^{\prime} ( ad_{Y} ^{\ast} \xi)   \nonumber   \\
&=& \omega (X) + \omega (Y).   \nonumber   
\end{eqnarray}

Therefore, from the above calculations, it follows that $ O(\xi) \rightrightarrows M$ is a symplectic groupoid.
\end{proof}\\

%{\bf Conflict of Interest:} There is no any financial and non-financial conflict of interest associated with  this work.

%{\bf Availability of Data and Materials:} Not applicable. Our manuscript does not contain any data.
\small{

}


\begin{thebibliography}{99}

\bibitem{Bolsinov} A.V. Bolsinov and A.T. Fomenko, \textit{Integrable Hamiltonian System: Geometry, Topology, Classification}, Boca Raton: CRC Press, (2004).

\bibitem{Bos} R. Bos, \textit{Geometric quantization of Hamiltonian actions of Lie algebroids and Lie groupoids}, Int. J. Geom. Methods Mod. Phys, 4 (2007), 389-436.

\bibitem{H A 1} Gh. Haghighatdoost and R. Ayoubi, \textit{Generalized geometric Hamilton-Jacobi theorem on
Lie algebroids}, arXiv:\\
1902.06969v1, (2019).

\bibitem{H A 2} Gh. Haghighatdoost and R. Ayoubi, \textit{Hamiltonian systems on co-adjoint Lie groupoids}, Journal of Lie Theory 31, 2 (2021), 493-516. 

\bibitem{Haghighatdoost} Gh. Haghighatdoost and F. Hasani, \textit{Lie group and Lie algebra 1}, Payame noor university, 1395.

\bibitem{HO} Gh. Haghighatdoost and A.A Oshemkov, \textit{The topology of Liouville foliation for the Sokolov integrable case on the Lie algebra so(4)}, Sbornik: Mathematics, 200(6), 899 -- 921, (2009).
\bibitem{H} Gh. Haghighatdoost  \textit{Optimal control problems on the co-adjoint Lie groupoids.}, Archives of control sciences, 34(4), (2024).

\bibitem{Kirillov} A.A. Kirillov, \textit{Lectures on the Orbit Method}, Graduate Studies in Mathematics, vol. 64, American Mathematical Society, Providence, (2004).

\bibitem{Lang} H. Lang and Zh. Liu, \textit{Co-adjoint orbits of Lie groupoids}, arXiv:1802.09923v2, (2018).

\bibitem{Mackenzie} K.C.H. Mackenzie, \textit{General theory of Lie groupoids and Lie algebroids},
 London Math. Soc. Lecture notes series 213, Cambridge University Press, Cambridge, (2005).
 
 \bibitem{Marle} C.M. Marle, \textit{Lie, symplectic and Poisson groupoids and their Lie algebroids}, arXiv preprint arXiv:1402.0059, (2014).
 
\bibitem{Marrero} J.C. Marrero , \textit{Hamiltonian dynamics on Lie algebroids, unimodularity and preservation of volumes}, arXiv preprint arXiv:0905.0123v1, (2009).

\bibitem{Schmeding} A. Schmeding, \textit{The Lie group of vertical bisections of a regular Lie groupoid}, Forum Mathematicum, 32, 479--489, (2019).

\bibitem{Sokolov} H. Sokolov, \textit{One class of quadratic so(4) Hamiltonians}, Dokl. Ross. Akad. Nauk, 394(5),
602–605, English transl. in Dokl. Math., 69(1), 108–111, (2004).

\bibitem{Trofimov} V.V. Trofimov and A.T. Fomenko, \textit{Dynamic systems on orbits of linear representations of Lie groups and the complate integrability of some hydrodynamic systems}, Functional Anal. Appl., 17 (1983), 31-39.

\bibitem{voronov} Th. Voronov, \textit{Vector bundles}, personalpages.manchester.ac.uk,Theodore.voronov,Teaching Differential Geometry, lecture2, (2009).

\end{thebibliography}
\end{document}